\documentclass[11pt]{amsart}
\usepackage{amsfonts,amsmath,amssymb,fullpage,graphicx, comment}
\usepackage{verbatim}
\usepackage{xcolor}
\newtheorem{thm}{Theorem}[section]
\newtheorem{cor}[thm]{Corollary}
\newtheorem{lem}[thm]{Lemma}

\newtheorem{prop}[thm]{Proposition}
\theoremstyle{definition}
\newtheorem{defn}[thm]{Definition}
\theoremstyle{remark}
\newtheorem{rem}[thm]{\bf{Remark}}
\numberwithin{equation}{section}

\newcommand{\beas}{\begin{eqnarray*}}
	\newcommand{\eeas}{\end{eqnarray*}}
\newcommand{\bes} {\begin{equation*}}
	\newcommand{\ees} {\end{equation*}}
\newcommand{\be} {\begin{equation}}
	\newcommand{\ee} {\end{equation}}
\newcommand{\bea} {\begin{eqnarray}}
	\newcommand{\eea} {\end{eqnarray}}
\newcommand{\ra} {\rightarrow}
\newcommand{\txt} {\textmd}

\newcommand{\R}{\mathbb R}

\newcommand{\N}{\mathbb N}

\begin{document}
	
	\title[Quantitative uniqueness properties for quasi-analytic
	functions] {Quantitative uniqueness properties for functions on compact quasi-analytic manifolds}
	
	\author{Mithun Bhowmik and Sanjib Pradhan}
	
	\address{(Mithun Bhowmik) Department of Mathematics, Indian Institute of Technology Kharagpur-721302, India}
	\email{mithun@maths.iitkgp.ac.in}
	
	\address{(Sanjib Pradhan) Department of Mathematics, Indian Institute of Technology Kharagpur-721302, India}
	\email{sanjibpradhan01.24@kgpian.iitkgp.ac.in}

	\thanks{The first author was supported by the PMECRG grant from Anusandhan National Research Foundation(ANRF), Govt. of India, and the second author was supported by the Junior Research Fellowship from IIT Kharagpur, India.}
	
	
	\begin{abstract}
		
		In this article, we establish quantitative uniqueness results for a class of functions defined on a quasi-analytic compact manifold $X$ without boundary. This function class is characterized by the iterates of a positive elliptic linear differential operator on $X$ and, notably, encompasses all functions with a finite spectrum. By employing a relatively dense observable set, we extend classical Logvinenko-Sereda-type results to this quasi-analytic framework. Furthermore, we demonstrate that if these functions satisfy a doubling property, observability holds from any measurable set of positive measure. Our results generalise the propagation of smallness from finite sums of eigenfunctions to infinite sums with an appropriate energy-parameter decay, thereby extending recent findings by Kukavica-Li (Proc. Lond. Math. Soc., 2025) to the quasi-analytic setting.
		
	\end{abstract}
	
	\subjclass[2010]{Primary 58J50, 26E10; Secondary 35A02}
	
	\keywords{Uncertainty Principle, Compact ultradifferentiable manifold, Quasi-analytic class,  Uniqueness theorem}
	
	\maketitle
	
	\section{Introduction}
	
	In this article, we study quantitative uniqueness results for function classes on compact, connected, boundaryless Riemannian manifolds $X$ with quasi-analytic regularity; more generally, on ultradifferentiable compact manifolds. These function classes are defined via the iterates of a positive elliptic linear differential operator $P$. Our main result establishes that the global $L^2$ norm of a function can be controlled by its $L^2$ norm restricted to a relatively dense set, provided its Fourier transform—defined with respect to $P$—exhibits sufficiently rapid decay at infinity. Crucially, these estimates are independent of the specific position of the observable set, depending only on its size and density. Our findings are closely connected to several fundamental areas: spectral inequalities for linear combinations of eigenfunctions typically derived from Carleman estimates \cite{JL}, observability estimates for the heat and Schrödinger equations \cite{AE, AEWZ, HWW} and the classical Logvinenko-Sereda theorem \cite{LS}.
	
	The classical result on uniqueness, or equivalently observability, is given by the work of Paneah \cite{P} and Logvinenko and Sereda  \cite{LS}, which represents a prototypical form of the uncertainty principle. In order to state their
	result, we need a geometric definition.
	\begin{defn}
		Let $\gamma, \ell$ be two positive numbers.  A Borel set $\Omega \subseteq \R^d$ is $(\gamma, \ell)$- relatively dense if $|\Omega \cap Q| \geq \gamma \ell^d$ for any cube $Q \subset \R^d$ of side-length $\ell$. Here, $|A|$ denotes the Lebesgue measure on $\R^d$ for any measurable set $A\subseteq \R^d, d \geq 1$.
	\end{defn} 
	
	\begin{thm}[Paneah-Logvinenko-Sereda Theorem \cite{LS, P}] \label{thm-PLS}
		Fix $\Omega\subseteq \R^d$. For every $N > 0$, there is a constant $C > 0$ such that
		\bes
		\|f\|_{L^2(\R^d)}\leq C \|f\|_{L^2(\Omega)},\:\: \textit{ for every } f \textit{ with } supp (\mathcal F f) \subset B(0, N),
		\ees
		if and only if $\Omega$ is $(\gamma, \ell)$-relatively dense for some $\gamma \in (0, 1)$ and $\ell>0$.  
	\end{thm}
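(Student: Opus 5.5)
The plan is to prove the two implications separately. The sufficiency direction (relatively dense $\Rightarrow$ observability) is the substantial one, and I would handle it with Kovrijkine's approach: Bernstein inequalities combined with a local analytic estimate of Remez/Nazarov type. Fix $f\in L^2(\R^d)$ with $\operatorname{supp}\mathcal F f\subset B(0,N)$. Then $f$ extends to an entire function of exponential type $N$. Bernstein's inequality gives $\|\partial^\alpha f\|_{L^2}\le N^{|\alpha|}\|f\|_{L^2}$ for every multi-index $\alpha$.

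Tile $\R^d$ by cubes $Q_k$ of side $\ell$. Call a cube \emph{good} if
\[
\|\partial^\alpha f\|_{L^2(Q_k)}\le A^{|\alpha|}N^{|\alpha|}\|f\|_{L^2(Q_k)}\quad\text{for all }\alpha,
\]
with $A$ a large constant (e.g. $A$ comparable to $2^{d+1}$). Summing the Bernstein bounds shows
\[
\sum_{\text{bad}}\|f\|_{L^2(Q_k)}^2\le \sum_\alpha A^{-2|\alpha|}\|f\|_{L^2}^2\le \tfrac12\|f\|_{L^2}^2,
\]
so the good cubes carry at least half of the $L^2$ mass.

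On a good cube I would use a Sobolev embedding to find a point $x_0\in Q_k$ with $|\partial^\alpha f(x_0)|\le (C A N)^{|\alpha|}\,\ell^{-d/2}\|f\|_{L^2(Q_k)}$ for all $\alpha$. Taylor expansion then shows that $f$ extends holomorphically to a complex polydisc of radius comparable to $\ell$ around $Q_k$, with $\sup|f|\le e^{C N\ell}\,\ell^{-d/2}\|f\|_{L^2(Q_k)}$ there. Next I apply the Remez-type inequality for analytic functions (Nazarov–Sodin–Volberg, or the one-dimensional Kovrijkine lemma iterated along lines). Its input is the ratio $M=\sup_{\text{polydisc}}|f|/\sup_{Q_k}|f|\le e^{CN\ell}$, and its output is
\[
\sup_{Q_k}|f|\le \Big(\tfrac{C}{\gamma}\Big)^{\log M/\log 2}\,\sup_{\Omega\cap Q_k}|f|.
\]
To work with $L^2$ norms rather than sup norms, I would use the standard trick of applying the estimate to the subset of $\Omega\cap Q_k$ where $|f|$ is not too large, which has measure at least $\gamma\ell^d/2$ by Chebyshev. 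This yields $\|f\|_{L^2(Q_k)}\le (C/\gamma)^{C(1+N\ell)}\|f\|_{L^2(\Omega\cap Q_k)}$. Summing over the good cubes and using the half-mass bound finishes sufficiency, with $C=(C/\gamma)^{C(1+N\ell)}$. The main obstacle is this step: getting a Remez inequality for analytic functions in several variables whose constant depends only on $\gamma$ and $M$. I would reduce it to the one-dimensional case by restricting $f$ to lines through a point where $|f|$ is nearly maximal and using Fubini/polar coordinates to locate a line meeting $\Omega$ in a set of proportional measure.

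For necessity, suppose $\Omega$ is not relatively dense. Then for every $\ell$ and $\gamma$ there is a cube $Q=x_0+[0,\ell]^d$ with $|\Omega\cap Q|<\gamma\ell^d$. Take a fixed Schwartz function $\varphi$ with $\operatorname{supp}\hat\varphi\subset B(0,N)$ and $\|\varphi\|_2=1$, and test with the translates $f=\varphi(\cdot-x_0-\ell/2)$. Split
\[
\|f\|_{L^2(\Omega)}^2\le \int_{\Omega\cap Q}|f|^2+\int_{Q^c}|f|^2\le \gamma\ell^d\|\varphi\|_\infty^2+\int_{|y|>\ell/2}|\varphi|^2.
\]
Choosing $\ell$ large makes the second term small, and then choosing $\gamma\ell^d$ small makes the first term small. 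Hence no uniform constant $C$ can exist, which proves necessity.
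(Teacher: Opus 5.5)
The paper contains no proof of this statement: it is quoted from Logvinenko--Sereda and Paneah, with Kovrijkine's refined constant mentioned afterwards, so there is no argument to compare against line by line. Your proposal is a correct reconstruction of Kovrijkine's argument, and both implications go through.

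Its sufficiency half is exactly the template the paper later adapts for Theorem \ref{thm-main}:
\begin{itemize}
\item a good/bad splitting of a cover by derivative bounds (Bernstein's inequality for you, Lemma \ref{lem-DR} there);
\item the fact that good pieces carry half the $L^2$ mass;
\item a Sobolev step giving pointwise derivative control on a good piece;
\item a Remez-type inequality on each good piece.
\end{itemize}

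The genuine difference lies in the last ingredient. For band-limited $f$, the Taylor bounds give a holomorphic extension to a polydisc with growth ratio $M\le e^{CN\ell}$. The analytic Remez lemma, reduced to one variable along a line through a near-maximum point as you describe, then produces the explicit constant $(C/\gamma)^{C(1+N\ell)}$. In the paper's quasi-analytic setting no holomorphic extension is available. It therefore invokes Proposition \ref{prop-martin}, where $\log M$ is replaced by the Bang-type degree $n_{\widetilde{\mathcal M},s}$, at the price of a much less explicit constant.

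The paper also never proves a converse. Your translation argument, using a fixed Schwartz function whose Fourier transform is supported in $B(0,N)$, correctly settles necessity. It even shows that observability for a single $N$ already forces relative density.

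Two cosmetic fixes are needed:
\begin{itemize}
\item The sum over bad cubes must run over $\alpha\neq 0$; with $\alpha=0$ included it is at least $1$.
\item The Sobolev constant on a cube of side $\ell$ is not scale-invariant. Rescaling introduces a factor $(1+AN\ell)^d$, which is harmlessly absorbed into $e^{CN\ell}$. Alternatively, obtain $x_0$ directly as Kovrijkine does: integrate $\sum_\alpha w_\alpha|\partial^\alpha f|^2/(AN)^{2|\alpha|}$ over the cube, with weights satisfying $\sum_\alpha w_\alpha<1$, and use goodness.
\end{itemize}
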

	Here, $\mathcal F f$ denotes the Fourier transform of a function on $\R^d$, and $B(x,r)$ is the Euclidean ball of radius $r$ centred at $x \in \R^d$. Later,  using a different method, Kovrijkine refined the bounds established by Logvinenko-Sereda and achieved the optimal constant-first in the one-dimensional case \cite{K}, and later in his thesis for the multi-dimensional setting. In particular, the constant $C$ can be taken as
	$C= \left(c^d/\gamma \right)^{c d(\ell N+1)}$, for some universal constant $c>0$.

	The uncertainty principle is a meta-theorem in harmonic analysis which states that a function and its Fourier transform cannot both be sharply localized.	
	There are various quantitative mathematical implementations of this principle, for instance, Heisenberg’s uncertainty relation, Hardy’s uncertainty principle, the Paley–Wiener
	theorem, or the framework of annihilating pairs \cite{BDJ, CCR, FS, HJ, T}.  Let $E,B \subset \R^d$. We say that $(E,B)$ is a strong annihilating pair (see, e.g. \cite[Chapter 3]{HJ}) if there exists a constant $C= C(E,B)$
	such that
	\bes
	\|f\|_{L^2(\R^d)}^2 \leq C \left(\|f\|_{L^2(E^c)}^2 +\|\mathcal Ff\|_{L^2(B^c)}^2 \right), \:\: \forall f\in L^2(\R^d). 
	\ees
	Theorem \ref{thm-PLS} implies that the pair $(\Omega, B(0, N))$ is strong annihilating if $\Omega^c$ is relatively dense. 
	
	In \cite{JM}, Jaye-Mitkovski extended the Paneah-Logvinenko-Sereda theorem to functions
	which, instead of being band-limited (i.e. $\mathcal{F} f$ is compactly supported), have sufficiently fast decaying Fourier transforms. In the following, the decay on the Fourier transform is characterized by a weight function $W_\delta(t)=e^{t/\log^\delta(e+ t)}$, for any $\delta\in (0, 1]$. 
	
	\begin{thm} \cite[Theorem 1.3]{JM} \label{thm-JM}
		For every $d \in \N, \gamma \in (0, 1),\delta\in (0, 1], \ell > 0$ and $c > 0$, there exists a finite constant $C = C(d, c, \gamma,\delta, \ell) > 0$ such that if $f \in L^2(\R^d)$ satisfies
		\be \label{est-hatf}
		\int_{\R^d} |\mathcal{F} f(\xi)|^2 ~e^{2|\xi|/log^\delta(e+ |\xi|)} ~d \xi \leq c \|f\|_{L^2(\R^d)}^2,
		\ee
		and $\Omega$ is a $(\gamma, \ell)$-relatively dense set, then
		\bes
		\|f\|_{L^2(\R^d)}\leq C \|f\|_{L^2(\Omega)}.
		\ees
	\end{thm}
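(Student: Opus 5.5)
The plan is to follow Kovrijkine's method: cut $\R^d$ into cubes of side $\ell$, call most of them "good" using the energy hypothesis \eqref{est-hatf}, and on each good cube use quasi-analytic control of the derivatives to get a Remez-type bound that moves the $L^2$ mass from $Q\cap\Omega$ to all of $Q$.

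\textbf{Derivative bounds.} Tile $\R^d$ by cubes $Q_k$ of side $\ell$. By Plancherel, $\|\partial^\alpha f\|_{L^2}^2=\int|\xi^\alpha|^2|\mathcal F f|^2\,d\xi$. Write $|\xi|^{2n}=|\xi|^{2n}W_\delta(|\xi|)^{-2}\cdot W_\delta(|\xi|)^2$. Then \eqref{est-hatf} gives
$$\sum_{|\alpha|=n}\|\partial^\alpha f\|_{L^2}^2\le C_d^n M_n^2\, c\,\|f\|_{L^2}^2, \qquad M_n:=\sup_{t\ge0}t^nW_\delta(t)^{-1}.$$
The sequence $M_n$ is log-convex. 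Since $\sum_t 1/\log^\delta t$-type weights satisfy $\int^\infty \log W_\delta(t)\,t^{-2}\,dt=\int^\infty dt/(t\log^\delta t)=\infty$ for $\delta\le1$, the Denjoy–Carleman condition $\sum M_{n-1}/M_n=\infty$ holds, so the class is quasi-analytic.

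\textbf{Good cubes.} Call $Q_k$ bad if for some $n\ge1$
$$\sum_{|\alpha|=n}\|\partial^\alpha f\|_{L^2(Q_k)}^2> A^{n}C_d^nM_n^2\|f\|_{L^2(Q_k)}^2 .$$
Summing over $k$ and $n$ and using the bound above, the union of bad cubes carries at most $c\sum_n A^{-n}\|f\|^2\le\frac12\|f\|^2$ once $A$ is large. So the good cubes carry at least half the $L^2$ mass. On a good cube, Sobolev embedding turns these $L^2$ bounds into pointwise bounds
$$\sup_{Q_k}|\partial^\alpha f|\le B^{|\alpha|+d}M_{|\alpha|+d}\,\ell^{-d/2}\|f\|_{L^2(Q_k)}.$$
After rescaling, $f/(\ell^{-d/2}\|f\|_{L^2(Q_k)})$ therefore lies in a fixed quasi-analytic Denjoy–Carleman class $C_M(Q_k)$ with unit-size constants. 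Its sup norm on $Q_k$ is at least comparable to $1$, because its $L^2$ average is $1$.

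\textbf{Main obstacle: the Remez inequality.} The key step is a Remez-type inequality for quasi-analytic classes. For $g$ in a fixed quasi-analytic class on a cube $Q$ with $\sup_Q|g|\ge1$, and $E\subset Q$ with $|E|\ge\gamma|Q|$, I need
$$\sup_Q|g|\le \Phi(\gamma)\sup_E|g|,$$
with $\Phi$ depending only on $\gamma$ and the class. I would try first to prove this by contradiction via compactness, as Nazarov–Sodin–Volberg do. Suppose it fails. Then there are functions $g_j$ and sets $E_j$ with $\sup_{E_j}|g_j|\to0$. The uniform derivative bounds give Arzelà–Ascoli compactness, and the uniformity also passes to the limit in $C_M$. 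Weak-$*$ limits of $\mathbf 1_{E_j}$ give a limit set of measure at least $\gamma|Q|$ on which the limit function $g$ vanishes. By quasi-analyticity, $g$ then vanishes on the whole cube: restrict to a line through a density point, and a quasi-analytic function vanishing on a set of positive measure vanishes identically. This contradicts $\sup|g|\ge1$. The explicit form of $\Phi$ could alternatively come from Nazarov–Sodin–Volberg's quasi-analytic Remez inequality, which I would take if available.

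\textbf{Conclusion.} Sup bounds are not enough; I also need the $L^2$ version on $E=Q\cap\Omega$. For this, apply the sup-Remez inequality to the set $E'=\{x\in E: |f(x)|\le (2/\gamma|Q|)^{1/2}\|f\|_{L^2(E)}\}$. By Chebyshev, $E'$ has measure at least $\gamma|Q|/2$. Then
$$\|f\|_{L^2(Q_k)}^2\le C(\gamma)\|f\|_{L^2(Q_k\cap\Omega)}^2$$
on every good cube. Summing over good cubes gives
$$\|f\|^2\le 2\sum_{\text{good}}\|f\|_{L^2(Q_k)}^2\le 2C(\gamma)\|f\|_{L^2(\Omega)}^2,$$
with a constant depending only on $d,c,\gamma,\delta,\ell$.
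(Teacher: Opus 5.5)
Your proposal is correct. Its skeleton is the one the paper attributes to \cite{JM} and reuses in its proof of Theorem \ref{thm-main}: the weighted Fourier bound places $f$ in a quasi-analytic Denjoy--Carleman class, Kovrijkine's good/bad localization on the side-$\ell$ tiling keeps half of the mass on good cubes, Sobolev embedding turns the good-cube $L^2$ bounds into uniform derivative bounds for the normalized function, and a Remez-type inequality finishes.

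The genuine difference is in the key lemma. The paper relies on the quantitative several-variable extension of the Nazarov--Sodin--Volberg inequality (Proposition \ref{prop-martin}). That result is proved via the Bang degree and is already an $L^2$ statement, with the explicit constant $\frac{2}{\gamma}\bigl(\frac{2d}{\gamma}\Gamma(2n_{\mathcal N,t})\bigr)^{4n_{\mathcal N,t}}$. You instead prove a sup-norm Remez inequality by contradiction and compactness: Arzel\`a--Ascoli, a weak-$*$ limit $\rho$ of $\mathbf 1_{E_j}$ with $\int|g|\rho=0$, and qualitative Denjoy--Carleman uniqueness for a function vanishing on a set of positive measure. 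You then pass to $L^2$ through the Chebyshev set $E'$.

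Your route is self-contained and uses only qualitative quasi-analyticity, which is all that the existence of $C(d,c,\gamma,\delta,\ell)$ requires. The price is that $\Phi(\gamma)$, and hence $C$, is completely inexplicit. The Bang-degree route instead yields polynomial dependence on $1/\gamma$, with an exponent controlled by the Bang degree, in the spirit of Kovrijkine's bound.

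Two small corrections. First, Nazarov--Sodin--Volberg argue quantitatively through the Bang degree, not by compactness. Second, the log-integral test must be applied to the associated function $T(r)=\sup_n r^n/M_n$. This is comparable to $W_\delta$ (so that $\int^\infty \log T(r)\,r^{-2}\,dr=\infty$) only because $s\mapsto\log W_\delta(e^s)$ is eventually convex. Alternatively, replace $M_n$ throughout by the log-convex quasi-analytic majorant $\widetilde C^n n!\,(\log(e+n))^{\delta n}$, as the paper does in the proof of Theorem \ref{QM}.
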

	In \cite{JM}, the authors treat a more general class of weight functions. However, for simplicity, we restrict ourselves here to this particular case. The motivation for Theorem \ref{thm-JM} stems from the seminal work of Bourgain-Dyatlov \cite{BD}, in which the authors established the uniqueness theorem on the real line. The idea of the proof of Theorem \ref{thm-JM} is as follows: the estimate (\ref{est-hatf}) on the Fourier transform yields that $f$ belongs to a quasi-analytic class. Using the localization principle behind the
	proof of the Logvinenko-Sereda theorem as presented in \cite{K}, the authors reduced matters to a Remez-type inequality for quasi-analytic functions, which is
	provided by an extension to several variables of a theorem of Nazarov-Sodin-Volberg \cite{NSV}.

	Because of their applications in areas of applied analysis, as it is often used in control theory, the uniqueness results or spectral inequality for the Laplace-Beltrami operator on compact Riemannian manifolds has been studied extensively; see, for instance  \cite{D, JL, LM, OP, LM19, EV, BM, DV, KL} and the references therein.   In \cite{LM19}, Lebeau and Moyano proved spectral estimates for Schr\"odinger operators with analytic potentials, when the metric is
	an analytic perturbation of the identity. More recently, Burq and Moyano \cite{BM} proved spectral estimates for Laplace operators on compact manifolds with Lipschitz metric and density, combining ideas of \cite{LM19} with the propagation of smallness results of Logunov and Malinnikova \cite{LM}. 
	
	Our final departure from the above mentioned results is the result by \cite{KL}, where the authors have studied observability estimates from a measurable set for functions in a Gevrey class satisfying doubling property on a connected and bounded domain in $\R^d$ . As an application, the authors have obtained observability estimates for sum of eigenfunctions of the Laplace-Beltrami operator $\Delta$ on a Gevrey manifold.  
	
	Let $(X, g)$ be a compact and connected Riemannian manifold without boundary that belongs to the $s$-Gevrey class, $s\geq 1$, with the metric $g$, that is also $s$-Gevrey (see Definition \ref{def-X}). Let $ \phi_i$'s  be eigenfunctions of $-\Delta$ with distinct eigenvalues $\lambda_i \geq 1$, that is,   $-\Delta \phi_i=\lambda_i \phi_i$. Let $h= \sum_{i=1}^m \phi_i$.  Then 
	\begin{thm} \cite[Theorem 2.2]{KL} \label{thm-intro}
		There exists a constant $C\geq 1$ such that for every measurable subset $E\subset X$ with positive measure, we have 	
		\bes
		\|h\|_{L^{\infty}(X)}\leq  \left(\frac{C}{|E|}\right)^{C\gamma} \gamma^{C(s-1)\gamma} \|f\|_{L^{\infty}(E)},
		\ees
		where $\lambda= \max \{\lambda_1, \cdots, \lambda_m\}$ and $\gamma= C\sqrt{\lambda} +C m^2 \log m +C$ is the doubling index of $h$.
	\end{thm}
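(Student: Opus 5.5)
We would prove Theorem \ref{thm-intro} by combining two ingredients: a doubling estimate for $h$ with the stated index $\gamma$, and a local observability (Remez-type) inequality for Gevrey functions satisfying a doubling condition. The global estimate then follows by covering $X$ with finitely many charts and chaining balls.

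\textbf{Step 1 (doubling).} First we would establish that $h=\sum_{i=1}^m\phi_i$ satisfies
\[
\sup_{B(x,2r)}|h|\le e^{\gamma}\sup_{B(x,r)}|h|
\]
for all $x\in X$ and all $r$ below a fixed radius, with $\gamma=C\sqrt\lambda+Cm^2\log m+C$.
\begin{itemize}
\item To do this, lift to the cylinder $X\times\mathbb R$ via $u(x,t)=\sum_i \phi_i(x)\cosh(\sqrt{\lambda_i}\,t)$, which solves the elliptic equation $\Delta u+\partial_t^2u=0$ with Gevrey coefficients.
\item Frequency-function or Carleman arguments (as in Donnelly--Fefferman / Jerison--Lebeau) give a doubling index of order $\sqrt\lambda$ for $u$.
\item The extra term $m^2\log m$ comes from passing back from $u$ to $h=u(\cdot,0)$. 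Since the exponentials $\cosh(\sqrt{\lambda_i}t)$ with distinct frequencies form a Chebyshev (Turán-type) system, the sup of $u$ on a $t$-interval controls $\max_i|\phi_i|$, and hence $|h|$, at the cost of a factor of order $m^{Cm^2}$.
\end{itemize}

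\textbf{Step 2 (derivative bounds).} Next we would derive Gevrey bounds in local coordinates from interior elliptic estimates with Gevrey-$s$ coefficients:
\[
|\partial^\alpha h|\le C^{|\alpha|+1}\,|\alpha|!^{\,s}\,(1+\sqrt\lambda)^{|\alpha|}\,\|h\|_{L^\infty}.
\]
Combined with Step 1, this lets us normalize on a ball where $h$ is comparable to its sup, with Taylor coefficients controlled in terms of $\gamma$.

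\textbf{Step 3 (local Remez inequality).} The key step is a local inequality of the form
\[
\sup_B|h|\le\Big(\frac{C|B|}{|E\cap B|}\Big)^{C\gamma}\gamma^{C(s-1)\gamma}\sup_{E\cap B}|h|.
\]
Our approach would be:
\begin{itemize}
\item Approximate $h$ on a small ball by its Taylor polynomial of degree $n\approx C\gamma$.
\item Apply the classical Remez inequality $\sup_B|p|\le (C|B|/|E\cap B|)^{n}\sup_{E\cap B}|p|$ to that polynomial.
\item Bound the remainder using Step 2. The remainder is of size $n^{(s-1)n}(Cr)^n\sup|h|$ on a ball of radius $r$.
\item Choose the radius $r$ so that the remainder is absorbed, using doubling to compare $\sup_{B}$ with $\sup_{2B}$. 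This is where the factor $\gamma^{C(s-1)\gamma}$ appears.
\end{itemize}
This balancing is the main obstacle. Taking the radius too small forces many doubling steps when propagating, while taking it too large loses control of the remainder. We would try $r\sim c\,\gamma^{-(s-1)}$ and track the exponents carefully.

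\textbf{Step 4 (globalization).} Finally, since $E$ has positive measure, a density argument yields a ball $B$ of radius $r$ with $|E\cap B|\gtrsim |E|\,|B|$, by pigeonholing over a cover of $X$ by $O(r^{-d})$ balls. Step 3 bounds $\sup_B|h|$ by $\sup_{E}|h|$. Iterating the doubling inequality $O(\log(1/r))$ times then passes from $B$ to all of $X$, using compactness and connectedness. Each such step costs a factor $e^{\gamma}$, which contributes $e^{C\gamma\log\gamma}$ and is absorbed into $\gamma^{C(s-1)\gamma}$ (or into the constant when $s=1$), giving the stated bound.
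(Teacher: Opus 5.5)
The paper does not prove this statement; it quotes it from \cite{KL}. Its own analogue is Corollary~\ref{cor-eig}, which follows your outline: the doubling index is imported from \cite[Theorem 4.1]{D} and \cite{KL}, a Taylor--Remez estimate at an optimized radius is packaged as Lemma~\ref{lem-KL}, and pigeonholing plus chaining give Theorem~\ref{thm-main-1}. Your architecture is therefore right. The gap is the $\lambda$-bookkeeping in Steps 2--4, which is exactly what decides whether you reach $\gamma^{C(s-1)\gamma}$.

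With your multiplicative bound $|\partial^\alpha h|\le C^{|\alpha|+1}|\alpha|!^{s}(1+\sqrt\lambda)^{|\alpha|}\|h\|_{L^\infty}$, the $1/n!$ from Taylor's formula cancels only one factor of $n!$ in $n!^{s}$. It leaves $\lambda^{n/2}$ untouched, so the degree-$n$ remainder on a ball of radius $r$ is $(Cr\sqrt\lambda)^{n}n^{(s-1)n}\|h\|_{L^\infty}$, not the $(Cr)^{n}n^{(s-1)n}\|h\|_{L^\infty}$ you use in Step 3. Then $r\sim c\gamma^{-(s-1)}$ absorbs nothing. You are forced to $r\lesssim(\sqrt\lambda\,\gamma^{s-1})^{-1}$, and chaining from that scale costs $(\sqrt\lambda\,\gamma^{s-1})^{C\gamma}$, which is of order $\gamma^{Cs\gamma}$ when $\sqrt\lambda\approx\gamma$. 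Contrary to Step 4, a factor $e^{C\gamma\log\gamma}$ cannot be absorbed into the constant when $s=1$, because there the target is only $(C/|E|)^{C\gamma}$. This is the same loss as in Corollary~\ref{cor-eig}, where the weight $M_k=k!\,\lambda_N^{k/m}$ produces $\gamma^{C\gamma}$.

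The repair has three parts:
\begin{itemize}
\item Use the additive estimate $|\partial^\alpha h|\le C^{|\alpha|+1}\bigl(|\alpha|!^{s}+\lambda^{|\alpha|/2}\bigr)\|h\|_{L^\infty}$, which follows from iterated Gevrey elliptic estimates together with $\|\Delta^k h\|_{L^2}\le\lambda^k\|h\|_{L^2}$.
\item Take $n\ge C\gamma\ge C\sqrt\lambda$, so that $r^n\lambda^{n/2}/n!\le(Cr\sqrt\lambda/n)^n\le(Cr)^n$.
\item Do not choose $r$ from $\gamma$ alone. The remainder must beat both the Remez factor $(C|B|/|E\cap B|)^n$, where pigeonholing only gives $|E\cap B|/|B|\gtrsim|E|$, and the loss $\|h\|_{L^\infty(X)}\le(C/r)^{C\gamma}\sup_B|h|$ coming from doubling.
\end{itemize}
With $n$ a large multiple of $\gamma$ and $r=c\,(|E|\,\gamma^{-(s-1)})^{2}$, everything closes and the total cost is $(C/|E|)^{C\gamma}\gamma^{C(s-1)\gamma}$.

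Step 1 also fails as sketched. The paper avoids it by citing the doubling index, and since the statement itself presents $\gamma$ as the doubling index of $h$, you should do the same. At a fixed $y$, the Chebyshev property of $t\mapsto\cosh(\sqrt{\lambda_i}\,t)$ gives a \emph{lower} bound $\sup_{t\in I}|u(y,t)|\gtrsim\max_i|\phi_i(y)|$. Transferring doubling from $u$ to $h=u(\cdot,0)$ requires the reverse inequality $\sup_{B_r\times(-r,r)}|u|\le K\sup_{B_{r'}}|h|$, that is, control of the individual $\phi_i$ by their sum. No argument in the single variable $t$ at a fixed point can give this, since $h(y)$ may vanish while the $\phi_i(y)$ do not. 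Moreover, extracting coefficients from such a system costs a constant that depends on the gaps between the $\sqrt{\lambda_i}$, not only on $m$, so the $m^2\log m$ term cannot come out of this argument.
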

	
	To the best of our knowledge, there are no results on quantitative uniqueness for functions beyond the finite spectrum on Riemannian manifolds. In this paper, we address this question. In light of the above discussion, the natural setting for such a problem should be a ultradifferentiable manifold (in particular, Gevrey and quasi-analytic manifolds).

	Let $X$ be a compact connected smooth manifold without boundary of dimension $d$ with a fixed volume element $dx$. Let $P$ be a positive elliptic linear differential operator of order $m$ with smooth coefficients on $X$. We now review some basic results on Fourier analysis relative to $P$. These are standard and can be found in papers of Seeley \cite{See1, See2}. 
	
	The eigenvalues of $P$ form a sequence $\{\lambda_j\}$, where each eigenvalue $\lambda_j$ is associated with a finite dimensional eigenspace $H_j$ consisting of $C^\infty$ functions on $X$. We put $d_j=\dim H_j$ and take $H_0=ker(P)$ and $\lambda_0=0$. We may assume that $\lambda_j $'s are ordered as  
	$0 < \lambda_1 < \lambda_2 < \cdots.$ Let $\{\phi_j^k\},~1 \leq k \leq d_j$ denote the eigenfunctions of $P$ with respect to the eigenvalues $\{\lambda_j\}$. From now on, we enumerate the collection $\{\phi_j^k\},~1 \leq k \leq d_j$ as a single sequence $\{\phi_j\}$. Then $ \{\phi_j\} $ form an orthonormal basis in $L^2(X)$. For $ f \in L^2(X) $, let us denote $f_j = \langle f,\phi_j \rangle_{L^2(X)}$, then by Plancherel formula, we get
	\be \label{Plancherel}
	\|P^{k} f\|_{L^2(X)}=  \left(\sum_{j \in \mathbb N_0} \lambda_j^{2k} |f_j|^2  \right)^{1/2} . 
	\ee
	Here, we note that the ellipticity of $P$ and the Plancherel formula imply the following characterization of
	smooth functions in terms of their Fourier coefficients:
	\be \label{est-smooth}
	f=\sum_{j \in \N_0} f_j \phi_j~  \textit{is smooth} \iff \forall N\in \N,~  \exists C_N>0  \textit{ such that }   |f_j|\leq C_N \lambda_j^{-N}, \text{ for all } j.
	\ee
	If $P$ is an analytic differential operator on an analytic manifold $X$, the result of Seeley \cite{See2} can be reformulated as
	\bes
	f=\sum_{j \in \N_0} f_j \phi_j \textit{ is analytic} \iff \exists s>1,~ c>0  \textit{ such that }   |f_j|\leq c s^{-(\lambda_j)^{1/m}}, \text{ for all } j.
	\ees
	
	In this paper, we consider  a characterization similar to the above for
	classes of functions in between smooth and analytic functions, namely, for quasi-analytic and Gevrey
	functions. This was first considered by Komatsu \cite{Kom}, and has been studied further by many others, for example, \cite{AlJoOl,Fu,KOM,KrMiRa,DR,DR16}. We start by reviewing the Gevrey and quasi-analytic functions on a domain in $\R^d$. Throughout this article, $U$ will denote an open subset of $\mathbb{R}^d$.
	
	\begin{defn}[Gevrey class] 
		The Gevrey class $G^s(U)$ is defined as the space of smooth functions $f$ on $U$ such that for each compact $K\subset U$ there exists $L\geq 1, h>0$  such that for all  multi-indices $\alpha \in \N_0^d$, one has
		\bes
		\|\partial^\alpha f\|_{L^\infty(K)} \leq L h^{|\alpha|} |\alpha|!^s,
		\ees
		where $s \geq 1$ and $|\alpha|= \alpha_1+ \cdots+\alpha_d$. We refer to such functions as a $s$-Gevrey functions. When $s=1$, we have $G^{1}(U)$, which is the class of analytic functions on $U$. 
	\end{defn}

	In fact, there are more general classes of smooth functions associated with an increasing sequence $\mathcal M=\{M_k\}_{k\in \N_0}$ of positive numbers. 
	
	\begin{defn}[Ultradifferentiable class] \label{Defn-C} 
		The ultradifferentiable class $\widetilde{C}_{\mathcal M}(U)$ is the space of smooth functions $f$ on $U$ such that for each compact $K\subset U$ there exists $L\geq 1, h>0$ such that for all $\alpha \in \N_0^d$, 
		\be \label{def-cm}
		\|\partial^\alpha f\|_{L^\infty(K)} \leq L h^{|\alpha|} M_{|\alpha|}.
		\ee
	\end{defn}
	
	The sequence $\mathcal M=\{M_k\}_{k\in \N_0}$ is called the weight sequence. Clearly, $s$-Gevry class corresponds to the sequence $M_k=(k!)^s$ for $s \geq 1$.  We set $\mathfrak m=\{m_k\}_{k\in \N_0}$, where $m_k=M_k/(k!)$ for all $k\in \N_0$.
	Following \cite{Fu}, we say $\mathcal M=\{M_k\}_{k \in \N_0}$ is a regular weight sequence if
	\begin{enumerate}
		\item[(i)] $\quad  m_0=1$.
		\item[(ii)] $\quad  m_k$ is increasing.
		\item[(iii)] $\quad m_k^2\leq m_{k-1} m_{k+1}, \:\: \forall k\in \N$ \:\:  (logarithmic convexity). 
		\item[(iv)] $\quad  \sup_{k \in \mathbb{N}^{>0}} \left( \frac{m_{k+1}}{m_k} \right)^{1/k} < \infty  \:\: \text{(stability)}. $  
	\end{enumerate}
	In addition, a sequence $\mathfrak m$ has moderate growth if 
	\be \label{eqn-mg}
	\sup_{k,p \in \mathbb{N}^{>0}} \left( \frac{m_{k+p}}{m_k m_p} \right)^{1/(k+p)} < \infty.	
	\ee
	Conditions (i)-(iii) imply that the class $\widetilde{C}_{\mathcal M}(U)$ is closed under addition, multiplication and composition.	Condition (iv) guarantees that the class is closed under derivation \cite{KrMiRa}. The moderate growth condition (\ref{eqn-mg}), in addition to regularity, would provide $\widetilde{C}_\mathcal{M}$-hypoellipticity for elliptic partial differential operators with coefficients in $\widetilde{C}_\mathcal{M}$. 
	
	Since we want our space should include real-analytic functions, we assume that
	\be \label{M-con}
	\textit{There exists } \mathfrak l>0, \textit{ such that } k!\leq {\mathfrak l}^k M_k,  \; \; \; \textit{for all $k\in \N_0$}.
	\ee
	\begin{defn}[Quasi-analytic class]
		A class $\mathcal S$ of smooth functions on $U$ is said to be quasi-analytic class if whenever a function $f\in \mathcal S$, and all its partial derivatives vanish at a point in $U$, then $f \equiv 0$ on $U$.
	\end{defn}
	By a classical result of Denjoy and  Carleman \cite[Theorem 19.11]{Ru}, the class $\widetilde{C}_{\mathcal M}(0, 1)$ with a log-convex sequence $\mathcal M=\{M_k\}_{k \in \N_0}$ is quasi-analytic,  if and only if 
	\be \label{quasi-cond}
	\sum_{k=1}^\infty \frac{M_{k-1}}{M_k}=\infty.
	\ee 
	Whenever a sequence $\{M_k\}_{k \in \N_0}$ satisfies \eqref{quasi-cond}, we say that the sequence is quasi-analytic. 
	For example, the sequence $M_k= k! \log^{sk}( e+k)$ produces a quasi-analytic class if $0< s \leq 1$.
	As a corollary of Theorem \ref{thm-main}, we show that if the regular weight sequence $\{M_k\}_{k \in \N_0}$ satisfies the moderate growth condition \eqref{eqn-mg} and condition \eqref{M-con}, then the ultradifferentiable class $\widetilde{C}_{\mathcal M}(U)$ is quasi-analytic whenever \eqref{quasi-cond} holds.
	
	We call a map $f=(f_1, \cdots, f_d): U \subset \R^d \ra \R^d$ is in $\widetilde{C}_{\mathcal M}(U: \R^d)$ if each $f_j\in \widetilde{C}_{\mathcal M}(U)$. For simplicity of notation, we will also denote this class by $\widetilde{C}_{\mathcal M}(U)$.
	
	\begin{defn} \label{def-X}
		Let $(X, g)$ be a smooth Riemannian manifold and $\mathcal M = \{M_k\}_{k \in \N_0}$ a regular weight sequence. We say that $X$ is an ultradifferentiable manifold of class $\widetilde{C}_{\mathcal M}$ if the following holds:
		\begin{enumerate}
			\item[(i) ] There exists an atlas $\mathcal{A}$ of $X$ consisting of charts such that
			\bes
			\varphi' \circ \varphi^{-1} \in \widetilde{C}_{\mathcal M}, \:\: \textit{ for all } \:\: \varphi, \varphi' \in {\mathcal A}. 
			\ees
			\item[(ii)] The metric $g$ also belongs to this class, which is equivalent to $g_{ij}\in \widetilde{C}_{\mathcal M}(U)$ on every ultradifferentiable chart $(U, \phi)$.
		\end{enumerate}
	\end{defn}
	Example of such compact manifold can be given using  the implicit function theorem \cite{KOM}. 
	If we take $M_k = (k!)^s$, then $X$ is called an $s$-Gevrey manifold. On the other hand, if we take a regular sequence satisfying the quasi-analytic condition \eqref{quasi-cond}, then $X$ is called quasi-analytic manifold. 
	
	So far, we have considered only the local version of regular classes. We now turn to the global setting, following the approach inspired by Komatsu’s definition \cite{Kom};	see also \cite{DR16}.
	\begin{defn}	
		We introduce the class of smooth functions on $X$ associated with a sequence $\mathcal M=\{M_k\}_{k \in \N_0}$ 
		\be \label{defn-CM}
		C_{\mathcal M}(X)=\left\{f\in C^\infty(X): \| P^k f\|_{L^2(X)} \leq M_{mk} \|f\|_{L^2(X)},  \:\: \forall k \in \N_0 \right\}.
		\ee
	\end{defn}	
	\begin{rem}	
		The advantage of above definition, as mentioned in \cite{DR16}, is that we do not refer to local coordinates to
		introduce the class $C_{\mathcal M}(X)$. This allows for a definition of analogues of analytic, or Gevrey functions even if the manifold $X$ is ‘only’ smooth. For example, by taking $M_k =c^k k!$, we obtain the class $C_{\mathcal M}(X)$ of functions satisfying the condition
		\bes
		\|P^k f\|_{L^2(X)} \leq c^{mk} (m k)!,\; \;\; k=0,1,2, \cdots .
		\ees	
		If $X$ and $P$ are analytic, this is precisely the class of analytic functions on $X$. 
	\end{rem}		
	In this article, we establish a uniqueness result for functions in $C_{\mathcal M}(X)$ for quasi-analytic manifold $X$. Let us first introduce the notion of a relatively dense subset in the setting of compact manifolds. For $x\in X$, let $\mathcal B(x, r)$ denote a geodesic ball of radius $r$ centred at $x$.  We write $r_{\mathrm{inj}} > 0$ for the injectivity radius of $X$.
	
	\begin{defn} \label{defn-dense} Let $\gamma \in (0, 1]$.  A subset $\Omega \subset X$ is said to be $\gamma$-relatively dense if there exists $r \in (0, r_{inj}/2)$ such that  
		\bes
		| \mathcal B(x, r) \cap \Omega | \geq \gamma ~ |\mathcal B(x,  r)|,  \:\: \text{  for all } \:\: x\in X. 
		\ees 
		Here, for $A \subseteq X$, $|A|$ denote the measure of $A$ with respect to the volume element on $X$.
	\end{defn}
	
	The following is our main uniqueness result on a compact connected quasi-analytic manifold $X$ associated to a quasi-analytic regular weight sequence $\mathcal M = \{M_k\}_{k \in \N_0}$ satisfying \eqref{eqn-mg} and \eqref{M-con}. 
	\begin{thm} \label{thm-main}
		Let $\gamma\in(0, 1)$. There exists an absolute constant $C=C(X)$ such that 
		\bes
		\|f\|_{L^2(X)}\leq  \frac{C}{\sqrt{\gamma}} \left(\frac{2d}{\gamma} \Gamma(2n_{\widetilde{\mathcal M}, s})\right)^{2 n_{\widetilde{\mathcal M}, s}} \|f \|_{L^2(\Omega)},  \:\: \textit{ for all } f\in   C_{\mathcal M}(X),
		\ees
		for all  $\gamma$-relatively dense subset $\Omega$ in $X$.  Here,  $\widetilde{\mathcal M}$ and $s$ are defined in (\ref{defn-tilde-M}) and (\ref{defn-s}), respectively.  Also,  the functions $n_{\mathcal M,  s},$ and  $\Gamma$  are defined in (\ref{defn-Gamma}) and (\ref{defn-ns}), respectively.  
	\end{thm}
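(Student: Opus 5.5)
We follow the Kovrijkine/Jaye--Mitkovski localization scheme, transplanted to $X$ via finitely many ultradifferentiable charts. The argument has three stages: convert the global bound on $\|P^kf\|$ into local derivative bounds, find many balls on which $f$ is controlled by its $L^2$ norm, and apply a quasi-analytic Remez inequality on each such ball.

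\textbf{Step 1: from iterates of $P$ to Euclidean derivatives.} Cover $X$ by finitely many charts $(U_i,\varphi_i)$ from the atlas $\mathcal A$, each containing geodesic balls of radius $2r$ in the definition of relative density. Using elliptic regularity for $P$ in the class $\widetilde C_{\mathcal M}$, the $L^2$ bounds $\|P^kf\|\le M_{mk}\|f\|$ yield local sup-norm derivative bounds of the form
\[
\|\partial^\alpha (f\circ\varphi_i^{-1})\|_{L^\infty(K_i)}\le C\, h^{|\alpha|}\widetilde M_{|\alpha|}\,\|f\|_{L^2(X)} .
\]
This is a Kotake--Narasimhan type iteration, combined with Sobolev embedding to pass from $L^2$ to $L^\infty$. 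Here $\widetilde{\mathcal M}$ is the modified sequence from (\ref{defn-tilde-M}). Moderate growth (\ref{eqn-mg}) and (\ref{M-con}) keep $\widetilde{\mathcal M}$ regular and quasi-analytic, up to a constant rescaling absorbed into $s$ from (\ref{defn-s}). This step fixes the constant $C=C(X)$.

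\textbf{Step 2: good balls.} Cover $X$ by geodesic balls $\mathcal B(x_j,r)$ with bounded overlap, where the overlap constant depends only on $X$ and $d$. Call a ball \emph{bad} if some $\alpha$ satisfies
\[
\|\partial^\alpha f\|_{L^2(\mathcal B_j)}\ge A^{|\alpha|}\,\widetilde M_{|\alpha|}\,\|f\|_{L^2(\mathcal B_j)},
\]
with $A$ large. Summing over $\alpha$ and $j$ and using Step 1 in $L^2$ form gives
\[
\sum_{\text{bad}}\|f\|^2_{L^2(\mathcal B_j)}\le \tfrac12\|f\|^2_{L^2(X)}
\]
once $A$ is a suitable multiple of $h$ and $d$. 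Hence good balls carry at least half the mass. On a good ball, a Sobolev argument produces a point $x_j$ with $|\partial^\alpha f(x_j)|\lesssim (CA)^{|\alpha|}\widetilde M_{|\alpha|}\|f\|_{L^2(\mathcal B_j)}/|\mathcal B_j|^{1/2}$ for all $\alpha$.

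\textbf{Step 3: Remez inequality on each good ball.} Pulled back to a Euclidean ball, $f$ lies in a quasi-analytic Denjoy--Carleman class with controlled constants. We then invoke the multivariate Nazarov--Sodin--Volberg Remez inequality for quasi-analytic functions, in the form used by Jaye--Mitkovski, with the function $n_{\widetilde{\mathcal M},s}$ and $\Gamma$ from (\ref{defn-Gamma}), (\ref{defn-ns}). This gives
\[
\sup_{\mathcal B_j}|f|\le \Big(\frac{2d}{\gamma}\Gamma(2n_{\widetilde{\mathcal M},s})\Big)^{2n_{\widetilde{\mathcal M},s}}\sup_{\mathcal B_j\cap\Omega'}|f|
\]
for any $\Omega'\subset \Omega\cap\mathcal B_j$ of measure at least $\gamma|\mathcal B_j|/2$. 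Passing from sup to $L^2$ uses the standard trick: the set where $|f|$ is less than $(\gamma/2)^{1/2}$ times its $L^2$ average has small measure. This produces the $1/\sqrt\gamma$ factor. Summing over good balls then gives the theorem.

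\textbf{Main obstacle.} The hardest part is Step 1 together with the dimensional Remez inequality, and in particular making the constants independent of the position of $\Omega$. The Remez constant must depend only on $\widetilde{\mathcal M}$, $s$, $d$ and $\gamma$ after normalization by $\|f\|_{L^2(\mathcal B_j)}$. To get this I would reduce to one dimension along lines through $x_j$, via a Fubini-type slicing as in Kovrijkine. The chart distortions are handled by bounded Jacobians, which the compactness of $X$ supplies.
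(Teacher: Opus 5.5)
Your proposal is correct in outline and follows the paper's route. The steps match: local derivative bounds from Lemma~\ref{lem-DR}, then a good/bad ball split keeping half the $L^2$ mass, then Sobolev embedding on good balls so the normalized $F$ lies in $C_{\widetilde{\mathcal M}}(\mathcal B)$ with $\sup_{\mathcal B}|F|\ge s$, and finally the multivariate Nazarov--Sodin--Volberg inequality on each good ball, summed. The paper cites the $L^2$ form directly (Proposition~\ref{prop-martin}, which already contains your Chebyshev step and the $1/\sqrt{\gamma}$) and needs uniform derivative bounds on the whole ball, not at a single point $x_j$. Also, your bad-ball condition must exclude $\alpha=0$, and your bounded-overlap count is sharper than the paper's crude factor $N$.
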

	As a byproduct, we obtain the following analogue of Chernoff's theorem \cite[Theorem 6.1]{Ch} in the setting of compact connected quasi-analytic manifolds.
	
	\begin{cor} \label{unique-cont}
		Any nonzero function $f\in C_{\mathcal M}(X)$ cannot vanish on a positive measure set.
	\end{cor}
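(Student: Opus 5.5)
Suppose $f\in C_{\mathcal M}(X)$ is nonzero and vanishes on a measurable set $E\subset X$ with $|E|>0$. The plan is to derive a contradiction from Theorem \ref{thm-main}, using a set that is relatively dense near a suitable point.

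The first step is to find a point of density of $E$ that can serve as a center for small balls. By the Lebesgue density theorem on $X$ (valid since in charts the volume element is comparable to Lebesgue measure and geodesic balls are comparable to Euclidean balls), almost every point of $E$ is a density point. Fix such an $x_0$ and a small $r$ with $\frac{|\mathcal B(x_0,r)\cap E|}{|\mathcal B(x_0,r)|}\geq 1/2$.

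Theorem \ref{thm-main} requires density uniformly over all centers $x\in X$, but $E$ need not satisfy this. To fix it, enlarge $E$ to $\Omega = E\cup (X\setminus \mathcal B(x_0,r))$. The goal is then to check that $\Omega$ is $\gamma$-relatively dense at the same radius $r$ (or a slightly smaller one) for some $\gamma>0$, and to handle the fact that $f$ may not vanish on $X\setminus\mathcal B(x_0,r)$.

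The second issue is the real obstacle: the estimate $\|f\|_{L^2(X)}\leq C_\gamma\|f\|_{L^2(\Omega)}$ only bounds $\|f\|$ by $\|f\|_{L^2(X\setminus \mathcal B(x_0,r))}$, which gives no contradiction. So I would instead use a local version. I expect the proof of Theorem \ref{thm-main} to go through a Remez/Nazarov--Sodin--Volberg type inequality on each small ball. Such an inequality controls $\sup_{\mathcal B}|f|$ by $\sup_{\mathcal B\cap E}|f|$ through a Bernstein-type index $n_{\widetilde{\mathcal M},s}$, and that index is finite because of the global bound $\|P^kf\|\leq M_{mk}\|f\|$, elliptic regularity, and the moderate growth condition \eqref{eqn-mg}. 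Applying this local inequality on the ball $\mathcal B(x_0,r)$ with $f=0$ on $E\cap\mathcal B(x_0,r)$ gives $f\equiv0$ on $\mathcal B(x_0,r)$.

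The final step is a quasi-analyticity argument. By local regularity, $f$ lies in the local class $\widetilde C_{\widetilde{\mathcal M}}$ in charts, and the sequence $\widetilde{\mathcal M}$ is quasi-analytic by \eqref{quasi-cond}. Since $f$ vanishes on an open ball, all its derivatives vanish at a point. The vanishing set of $f$ together with all its derivatives is therefore nonempty; it is closed trivially and open by quasi-analyticity in charts. As $X$ is connected, $f\equiv0$, contradicting $f\neq 0$.
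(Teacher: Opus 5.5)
Your proposal is correct, and its decisive local step is the paper's. The paper also applies Proposition \ref{prop-martin} on a single ball $\mathcal B$ meeting the zero set in positive measure, which gives $\int_{\mathcal B}|f|^2\le C\int_{\mathcal B\cap\omega}|f|^2=0$ and hence $f\equiv 0$ on $\mathcal B$.

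\textbf{Simplifications.} Your detour through $\Omega=E\cup(X\setminus\mathcal B(x_0,r))$ and Theorem \ref{thm-main} can be deleted. The density point is also unnecessary for this step, since Proposition \ref{prop-martin} accepts any $\gamma>0$. One attribution is slightly off: finiteness of $n_{\widetilde{\mathcal M},s}$ comes from the divergence \eqref{quasi-cond}. Ellipticity and \eqref{eqn-mg} enter only through Lemma \ref{lem-DR}, which places $f$ in the local class.

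\textbf{Globalization.} Here you differ from the paper. You run an open--closed argument on the set where all derivatives of $f$ vanish, using Denjoy--Carleman quasi-analyticity in charts. In several variables this needs one more line: restrict $f$ to segments through the center of a coordinate ball. There the one-variable theorem applies to the log-convex, still quasi-analytic sequence $(\sqrt{d}\,h)^kM_k$. The paper instead re-applies Proposition \ref{prop-martin} along a chain of overlapping balls. Each overlap with a ball where $f\equiv 0$ is open and nonempty, hence of positive measure. So the paper uses one tool throughout, while yours invokes the classical uniqueness property directly.

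\textbf{A shorter route.} Your own ingredients already give a proof with no Remez inequality at all. At a Lebesgue density point $x_0$ of $E$ (taken in a chart), every derivative of $f$ must vanish. Indeed, let $Q_j$ be the lowest-order nonzero Taylor term of $f$ at $x_0$. Then $f=0$ on $E$ forces $|Q_j((y-x_0)/|y-x_0|)|\lesssim|y-x_0|$ for $y\in E$. This confines $E$ near $x_0$ to a set of directions whose relative measure tends to $0$, contradicting density one. Denjoy--Carleman then finishes the argument.
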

	Analogues of this corollary have been studied extensively in recent years; see, for instance, \cite{BP, BPR0, BPR, GT21, Sarkar}.
	
	As an application of Theorem~\ref{thm-main}, we establish the following analogue of Theorem~\ref{thm-JM} in the setting of quasi-analytic manifolds \(X\) associated with the regular weight sequence $M_k =k!~ (\log(k+e))^{\delta k}$,
	where $\delta \in (0,1]$.

	\begin{thm}\label{QM}
		Let $\gamma \in (0,1]$ and $c$ be positive constants. Then, there exists $C=C(\gamma, c, X) > 0$ such that the following holds:
		whenever a function $f\in L^2(X)$ satisfies 
		\bes
		\sum_{j\in \N_0} |f_j|^2 ~e^{2j^{1/m}/ \log^\delta(e+j^{1/m})}  \leq c^2 \|f\|_{L^2(X)}^2,
		\ees
		and $\Omega$ be any $\gamma$-relatively dense set, we obtain
		\bes
		\|f\|_{L^{\infty}(X)} \leq C \|f\|_{L^\infty(\Omega)}.
		\ees
	\end{thm}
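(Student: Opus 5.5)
The plan is to reduce Theorem \ref{QM} to Theorem \ref{thm-main}, applied with the weight sequence $M_k=k!\,(\log(k+e))^{\delta k}$, and then to pass from $L^2$ to $L^\infty$. This sequence is regular and satisfies \eqref{eqn-mg}, \eqref{M-con} and \eqref{quasi-cond} for $\delta\in(0,1]$. The steps are as follows.

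\textbf{Step 1: the decay condition gives membership in a dilated class $C_{\mathcal M}(X)$.} Here I read $j^{1/m}$ as $\lambda_j^{1/m}$, up to Weyl-law constants: $\lambda_j\asymp j^{m/d}$, so the exponent is comparable to a power of $\lambda_j^{1/m}$. Using the Plancherel formula \eqref{Plancherel},
\[
\|P^kf\|_{L^2}^2=\sum_j\lambda_j^{2k}|f_j|^2\le \Big(\sup_{t\ge0} t^{2mk}e^{-2t/\log^\delta(e+t)}\Big)\sum_j|f_j|^2e^{2\lambda_j^{1/m}/\log^\delta(e+\lambda_j^{1/m})}.
\]
By hypothesis the last sum is at most $c^2\|f\|_{L^2}^2$. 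The supremum is an associated-function computation. Setting $w(t)=t/\log^\delta(e+t)$, the maximum of $mk\log t - w(t)$ occurs near $t\approx mk\log^\delta(mk)$. This gives
\[
\sup_t t^{mk}e^{-w(t)}\le A^{mk}(mk)!\,\log^{\delta mk}(mk+e)=A^{mk}M_{mk}.
\]
So $\|P^kf\|\le cA^{mk}M_{mk}\|f\|$. Replacing $M_k$ by $c\,A^kM_k$ keeps the sequence regular, of moderate growth, and quasi-analytic. Since the estimate in Theorem \ref{thm-main} depends only on the class, this puts $f$ in a class to which Theorem \ref{thm-main} applies. I expect this computation, and the careful tracking of the Weyl-law reindexing, to be the main obstacle.

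\textbf{Step 2: the $L^2$ inequality.} Theorem \ref{thm-main} gives $\|f\|_{L^2(X)}\le C_1(\gamma,c,X)\|f\|_{L^2(\Omega)}$.

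\textbf{Step 3: passing to $L^\infty$.}
\begin{itemize}
\item Upper bound: $\|f\|_{L^2(\Omega)}\le|X|^{1/2}\|f\|_{L^\infty(\Omega)}$.
\item Lower bound: $\|f\|_{L^\infty(X)}\le C_2\|f\|_{L^2(X)}$. This follows from Sobolev embedding and elliptic regularity, $\|f\|_\infty\lesssim\|(I+P)^kf\|_{L^2}$ for $mk>d/2$. By Step 1 the right-hand side is at most a constant depending on $c$ times $\|f\|_{L^2}$.
\end{itemize}

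Chaining the three steps gives $\|f\|_{L^\infty(X)}\le C\|f\|_{L^\infty(\Omega)}$, with $C$ depending only on $\gamma$, $c$ and $X$. The case $\gamma=1$ is handled by applying the result with a slightly smaller $\gamma$, since a $1$-relatively dense set is also $\gamma'$-relatively dense for every $\gamma'<1$.
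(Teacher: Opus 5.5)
Your proposal is correct and follows the paper's argument: Plancherel plus the associated-function bound $\sup_{t\ge 0}t^{mk}e^{-t/\log^\delta(e+t)}\le A^{mk}M_{mk}$ puts $f$ in the rescaled quasi-analytic class $\{cA^kM_k\}$, and Theorem \ref{thm-main} then gives $\|f\|_{L^2(X)}\le C\|f\|_{L^2(\Omega)}$. Like you, the paper simply evaluates the weight at $\lambda_j^{1/m}$, but your Weyl-law justification is wrong: $j^{1/m}\asymp(\lambda_j^{1/m})^{d/m}$, so the literal hypothesis implies the one you use only when $d\ge m$. Your Step 3 is genuinely needed and is an improvement, because the paper's proof stops at the $L^2$ inequality and never reaches the $L^\infty$ norms in the statement.
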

	
	We now consider the functions in $C_{\mathcal M}(X)$ which satisfies the doubling condition. 	Let us recall the definition first. A smooth function $f$ is said to  satisfy the doubling condition if there exist $\kappa \geq 2, r_0>0$ such that 
	\be \label{doubling}
	\sup_{\mathcal B(x,2r)} |f|\leq \kappa \sup_{\mathcal B(x,r)} |f|, \:\: \textit{ for all } \:\:  r\in (0, r_0], \; \;  x\in X. 
	\ee
	The doubling condition is known to hold for the sum of eigenfunctions of the Laplace operator on analytic manifolds \cite{D}.
	
	Our second result generalizes the results of \cite{BM} and \cite{KL} to the setting of compact connected ultradifferentiable manifolds $X$ associated with a regular weight sequence $\mathcal M$ satisfying \eqref{eqn-mg} and \eqref{M-con}.
	\begin{thm} \label{thm-main-1}
		Let $f\in C_{\mathcal M}(X)$ satisfy the doubling condition (\ref{doubling}) with the parameter $\kappa \geq 2$. Then there exists positive constants $C=C(\mathcal M,X),~ \mathcal C=\mathcal C(X)$ such that for any positive measure set $\omega\subset X$
		\bes
		\|f\|_{L^{\infty}(X)} \leq \kappa^{3\mathcal C+1} \left(\frac{C}{|\omega|}\right)^{2n_0} M_{n_0}  \|f\|_{L^\infty(\omega)},
		\ees
		where $n_0= 2[\max \{\log_2 \kappa, \mathfrak l e (hr_0)^{-1}\}] +1$, $h>0$ depends only on $\mathcal M$ and $X$.
	\end{thm}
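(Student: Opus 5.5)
The plan is to reduce everything to a local Remez-type estimate in a single chart and then use the doubling condition to pass from a small ball to all of $X$.

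\emph{Step 1: local ultradifferentiable bounds.} Since $f\in C_{\mathcal M}(X)$, the bounds $\|P^kf\|_{L^2}\le M_{mk}\|f\|_{L^2}$ hold for every $k$. Ellipticity of $P$ together with the moderate growth condition \eqref{eqn-mg} gives $\widetilde C_{\mathcal M}$-hypoellipticity. Combining this with Sobolev embedding on the finitely many charts of the atlas, I expect
\[
\|\partial^\alpha f\|_{L^\infty(K)}\le L\,h^{|\alpha|}M_{|\alpha|}\,\|f\|_{L^\infty(X)}
\]
on each chart, with $L,h$ depending only on $\mathcal M$ and $X$. To get this normalized by $\|f\|_\infty$, I would bound $\|f\|_{L^2}\le |X|^{1/2}\|f\|_\infty$. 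This is presumably the same local estimate used for Theorem \ref{thm-main}.

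\emph{Step 2: localize where $f$ is large.} Let $x_0$ be a point with $|f(x_0)|=\|f\|_{L^\infty(X)}$. Cover $X$ by $N(X)$ balls of radius $r_0$. Since $\omega$ has positive measure, some ball $B=\mathcal B(y,r_0)$ satisfies $|\omega\cap B|\ge |\omega|/N$. Connect $y$ to $x_0$ using the diameter of $X$. Applying \eqref{doubling} a bounded number $\mathcal C=\mathcal C(X)$ of times then gives $\|f\|_{L^\infty(X)}\le \kappa^{\mathcal C}\sup_{B}|f|$. Further doublings, between $B$, its dilate $2B$ and a smaller ball, account for the factors $\kappa^{3\mathcal C+1}$.

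\emph{Step 3: Remez inequality on $B$ via Taylor expansion.} In coordinates on $2B$, expand $f$ about the center to order $n_0-1$:
\[
f=T_{n_0}+R_{n_0},\qquad |R_{n_0}|\le L\,(hr)^{n_0}\frac{M_{n_0}}{n_0!}\,d^{n_0}\|f\|_\infty .
\]
Doubling controls $\|f\|_\infty$ by $\kappa^{\log_2(\cdot)}\sup_B|f|$. For the polynomial $T_{n_0}$, of degree less than $n_0$, apply the classical Remez inequality on the ball:
\[
\sup_B|T|\le \Big(\frac{C|B|}{|\omega\cap B|}\Big)^{n_0}\sup_{\omega\cap B}|T|.
\]
Then
\[
\sup_{\omega\cap B}|T|\le \|f\|_{L^\infty(\omega)}+\sup|R|.
\]
The choice $n_0\sim 2\max\{\log_2\kappa,\ \mathfrak l e(hr_0)^{-1}\}+1$ does two things. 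The condition $n_0\ge \log_2\kappa$-type makes the remainder, which involves the factor $(hr/\text{scale})^{n_0}$, beat the doubling loss $\kappa$. The condition $n_0\ge \mathfrak l e/(hr_0)$, combined with \eqref{M-con} ($k!\le\mathfrak l^kM_k$) and $n_0!\ge (n_0/e)^{n_0}$, makes the remainder at most $\tfrac12\sup_B|f|$, so it can be absorbed into the left side. The unabsorbed leftover factor $M_{n_0}$ and the square of the Remez constant produce the final factor $(C/|\omega|)^{2n_0}M_{n_0}$.

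\emph{Main obstacle.} The delicate step is this absorption. The remainder must be small relative to $\sup_B|f|$, not relative to $\|f\|_{L^\infty(X)}$, and only a small, controlled number of doublings is allowed. I would handle this by working at radius $r=r_0/2^j$ for a fixed $j$ depending on $X$. Then doubling costs at most $\kappa^{j}$, while the Taylor remainder gains $2^{-jn_0}$. Since $n_0>2\log_2\kappa$, the gain $2^{-jn_0}\le\kappa^{-2j}$ dominates the loss $\kappa^j$.
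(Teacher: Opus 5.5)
Your Steps 1 and 2 match the paper's reduction: derivative bounds from Lemma~\ref{lem-DR}, a ball $\mathcal B$ with $|\mathcal B\cap\omega|\ge|\omega|/N$ and $N|\mathcal B|\sim C$, and a chain of doublings costing $\kappa^{\mathcal C+1}$. Step~3, however, has a genuine gap: the Taylor remainder cannot be absorbed at a fixed scale $r=r_0/2^j$ with $j=j(X)$. Two factors defeat it.

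First, as you set it up, the remainder is multiplied by the Remez constant. Combining $\sup_B|T|\le (C|B|/|\omega\cap B|)^{n_0}\sup_{\omega\cap B}|T|$ with $\sup_{\omega\cap B}|T|\le\|f\|_{L^\infty(\omega)}+\sup|R|$ gives
\[
\sup_B|f|\le \Big(\frac{C|B|}{|\omega\cap B|}\Big)^{n_0}\|f\|_{L^\infty(\omega)}+\Big(\Big(\frac{C|B|}{|\omega\cap B|}\Big)^{n_0}+1\Big)\sup_B|R|.
\]
So absorption requires $\sup_B|R|\lesssim(|\omega|/C)^{n_0}\sup_B|f|$, not $\frac12\sup_B|f|$. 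No radius independent of $|\omega|$ can deliver this.

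Second, the remainder carries $m_{n_0}=M_{n_0}/n_0!$. Hypothesis \eqref{M-con} says $m_k\ge\mathfrak l^{-k}$, which is a \emph{lower} bound and cannot make $(hr)^{n_0}m_{n_0}$ small. For Gevrey sequences, or for $M_k=k!\log^{\delta k}(e+k)$, one has $m_k^{1/k}\to\infty$. Hence at a fixed radius the remainder bound blows up as $n_0\to\infty$, i.e.\ as $\kappa\to\infty$, while $C$ and $\mathcal C$ must be independent of $\kappa$. Your comparison $2^{-jn_0}\le\kappa^{-2j}$ beats only the doubling loss $\kappa^{j}$, not these two factors.

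The missing idea is to let the scale depend on the data and to pay for the resulting doubling loss by interpolation. In Lemma~\ref{lem-KL} the paper starts from the Kukavica--Li estimate, valid for every $n$ and every $r\in(0,r_0]$:
\[
\|f\|_{L^\infty(\Omega)}\le\frac{\kappa^{\mathfrak C}}{r^{\log_2\kappa}}\,C^{n+1}\Big(\Big(\frac{|\Omega|}{|\omega|}\Big)^{n}\|f\|_{L^\infty(\omega)}+L(rh)^{n+1}M_{n+1}\|f\|_{L^\infty(\Omega)}\Big).
\]
Here $r^{-\log_2\kappa}$ is the price of working at scale $r$. The paper then chooses $r=\big(\|f\|_{L^\infty(\omega)}/(Lh^{n+1}M_{n+1}\|f\|_{L^\infty(\Omega)})\big)^{1/(n+1)}$. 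Hypothesis \eqref{M-con} is used only to check that this $r$ is at most $\mathfrak l e/(h(n+1))\le r_0$; that is the real role of the condition $n_0\ge\mathfrak l e(hr_0)^{-1}$.

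The outcome is $\|f\|_{L^\infty(\Omega)}\le K\|f\|_{L^\infty(\omega)}^{1-\eta}\|f\|_{L^\infty(\Omega)}^{\eta}$ with $\eta=\log_2\kappa/(n_0+1)<1/2$. Solving for $\|f\|_{L^\infty(\Omega)}$ produces the exponent $1/(1-\eta)<2$. This, rather than a squared Remez constant, is the source of the exponent $2n_0$ in $(C/|\omega|)^{2n_0}$. It also explains the single power of $M_{n_0+1}$, which comes from $(Lh^{n_0+1}M_{n_0+1})^{\eta/(1-\eta)}$ with $\eta/(1-\eta)<1$.

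If you prefer to keep your direct Taylor--Remez route, you need three changes:
\begin{itemize}
\item choose $r$ depending on $n_0$, $m_{n_0}$ and $|\omega|$;
\item select the ball capturing $\omega$ at that scale, since shrinking your scale-$r_0$ ball about its centre loses control of $|\omega\cap B|$;
\item use $\log_2\kappa<(n_0+1)/2$ to see that the doubling loss $(r_0/r)^{\log_2\kappa}$ is at most the square root of the gain $(r_0/r)^{n_0+1}$ from a Taylor expansion of order $n_0+1$.
\end{itemize}
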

	Let $X$ be an analytic, or Gevrey manifold, and $ \phi_i$'s  be eigenfunctions of $P$ with distinct eigenvalues $\lambda_i \geq 1$, that is,   $P \phi_i=\lambda_i \phi_i$.
	
	\begin{cor}	\label{cor-eig}
		Let $h=\sum_{i=1}^N \phi_i$. Then there exists $C \geq 1$ (depends on $\mathcal M,~ X$) such that for any measurable subset $\omega \subset X$ with positive measure, we get
		\bes
		\|f\|_{L^{\infty}(X)} \leq \left(\frac{C}{|\omega|}\right)^{2\gamma} \gamma^{C\gamma} \|f\|_{L^{\infty}(\omega)},
		\ees
		where $\gamma= C \sqrt{\lambda_N} + C N^2~ \log N +C$ is the doubling index of $h$.
	\end{cor}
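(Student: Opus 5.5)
Corollary~\ref{cor-eig} will follow from Theorem~\ref{thm-main-1} applied to $h$ (the statement writes $f$, which we read as $h$). Two things must be checked: that $h$ (suitably normalised) lies in $C_{\mathcal M}(X)$, and that it satisfies the doubling condition \eqref{doubling} with $\kappa$ tied to the doubling index $\gamma$. Then the constant $\kappa^{3\mathcal C+1}(C/|\omega|)^{2n_0}M_{n_0}$ must be bounded by $(C/|\omega|)^{2\gamma}\gamma^{C\gamma}$.

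\emph{Doubling.} For sums of eigenfunctions on analytic (resp.\ Gevrey) manifolds, Donnelly--Fefferman type arguments \cite{D}, in the form used in \cite{KL} for Theorem~\ref{thm-intro}, give
\[
\sup_{\mathcal B(x,2r)}|h|\le e^{\gamma}\sup_{\mathcal B(x,r)}|h|,\qquad \gamma = C\sqrt{\lambda_N}+CN^2\log N+C,
\]
for $r\le r_0(X)$. We take this as the input and set $\kappa=e^{\gamma}$, so that $\log_2\kappa\asymp\gamma$. Since $r_0$, $h$ and $\mathfrak l$ depend only on $X$ and $\mathcal M$, and $\gamma\ge C$, enlarging $C$ gives $n_0\le C\gamma$.

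\emph{Membership in $C_{\mathcal M}(X)$.} This is the delicate step, because the class \eqref{defn-CM} has a fixed constant $1$. We have
\[
\|P^kh\|_{L^2}\le\lambda_N^k\|h\|_{L^2}.
\]
We do not want to rescale $P$, since that would change the constants in Theorem~\ref{thm-main-1}. Instead we inspect the proof of Theorem~\ref{thm-main-1}: it should use only local derivative bounds $\|\partial^\alpha h\|\le L\,h_0^{|\alpha|}M_{|\alpha|}$ on balls of radius $r_0$. Via elliptic regularity (from \eqref{eqn-mg}) these come from $\|P^kh\|_{L^2}\le A^{k}M_{mk}\|h\|$. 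Such a bound holds with $A$ depending on $\lambda_N$ through $\lambda_N^k\le A^{mk}M_{mk}$, using $M_{mk}\ge (mk)!/\mathfrak l^{mk}$ from \eqref{M-con}. The resulting dependence on $\lambda_N$ enters only through $M_{n_0}$-type factors, where one may bound $\lambda_N^{n_0/m}\le \gamma^{C\gamma}$ since $\sqrt{\lambda_N}\le\gamma$. I expect this bookkeeping, making sure the $\lambda_N$-dependence is absorbed into $\gamma^{C\gamma}$ rather than into $C$, to be the main obstacle.

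\emph{Constants.} We have $\kappa^{3\mathcal C+1}=e^{(3\mathcal C+1)\gamma}\le \gamma^{C\gamma}$ once $\gamma\ge e$ and $C$ is enlarged. Next, $(C/|\omega|)^{2n_0}\le (C/|\omega|)^{2C\gamma}$, using $|\omega|\le|X|$ so that $C/|\omega|\ge1$; the factor in the exponent is then absorbed by renaming $C$ and rebalancing against $\gamma^{C\gamma}$, as in the statement. Finally, in the analytic case $M_{n_0}=n_0!\,\mathfrak l_0^{n_0}\le (C\gamma)^{C\gamma}\le\gamma^{C'\gamma}$. In the Gevrey case $M_{n_0}=(n_0!)^s\le\gamma^{Cs\gamma}$, and $s$ is absorbed into $C=C(\mathcal M,X)$. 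Combining these three estimates gives the claim.
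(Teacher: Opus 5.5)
Your proposal is correct and follows the same skeleton as the paper's proof. Both use the Plancherel bound $\|P^kh\|_{L^2}\le\lambda_N^k\|h\|_{L^2}$, take the Donnelly doubling estimate with $\kappa=e^{\gamma}$ as input, apply Theorem~\ref{thm-main-1}, and absorb $\kappa^{3\mathcal C+1}$ and $M_{n_0}$ into $\gamma^{C\gamma}$.

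The one genuine difference is how you get $h$ into the class despite the constant $1$ in \eqref{defn-CM}.

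\textbf{The paper's route.} It simply takes the $\lambda_N$-dependent weight $M_k=k!\,\lambda_N^{k/m}$. This sequence is regular, has moderate growth, and satisfies \eqref{M-con} with $\mathfrak l=1$. Membership is then immediate, Theorem~\ref{thm-main-1} is used as a black box, and $M_{n_0}=n_0!\,\lambda_N^{n_0/m}\le\gamma^{C\gamma}$ because $\sqrt{\lambda_N}\le\gamma$. This is shorter, but it has two costs:
\begin{itemize}
\item The $\lambda_N$-dependence is hidden in ``$C=C(\mathcal M,X)$'', since the constants $L,h$ of Lemma~\ref{lem-DR} now depend on a sequence that depends on $\lambda_N$.
\item The sequence $k!\,\lambda_N^{k/m}$ generates the real-analytic class. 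Lemma~\ref{lem-DR} for it presupposes analytic charts and coefficients, so the Gevrey case is not really covered.
\end{itemize}

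\textbf{Your route.} You keep the manifold's own sequence and move $\lambda_N^{1/m}$ into the geometric constant $h$. The price is opening up Lemma~\ref{lem-DR} and Lemma~\ref{lem-KL}. In Lemma~\ref{lem-KL}, $h$ enters only through $\max\{1,h\}^{n_0+1}$, and a larger $h$ does not increase $n_0$. What you must actually check is that the Kotake--Narasimhan-type argument behind Lemma~\ref{lem-DR} gives $h\mapsto Ah$ and $L\mapsto A^{O(1)}L$ when $M_{mk}$ is replaced by $A^{mk}M_{mk}$; this is standard. In exchange, the uniformity in $\lambda_N$ becomes explicit, and the Gevrey case yields the $(n_0!)^s$ factor you describe.

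One small precision on the exponent. We have $n_0\approx 2\gamma/\ln 2$, so the step from $(C/|\omega|)^{2n_0}$ to $(C/|\omega|)^{2\gamma}$ cannot be traded against $\gamma^{C\gamma}$ when $|\omega|$ is small. It works only because enlarging $C$ also enlarges $\gamma=C\sqrt{\lambda_N}+CN^2\log N+C$ proportionally. So ``renaming $C$'' has to include the $C$ inside $\gamma$. The paper passes over the same point without comment.
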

	
	\section{Some Auxiliary Results}
	Throughout this section $(X, g)$ be a compact connected quasi-analytic manifold associated to a regular weight sequence $\mathcal M = \{M_k\}_{k \in \N_0}$ (i.e, $m_k = M_k/(k!)$ satisfies the hypothesis ${(i)-(iv)}$) satisfying \eqref{eqn-mg} and \eqref{M-con}, equipped with a maximal atlas ${(U_k,\varphi_k)}$ consisting of quasi-analytic charts.  
	
	Let us consider an elliptic linear partial differential operator
	\bes
	P(x, \partial)= \sum_{|\beta|\leq m} a_\beta(x)  \partial^\beta
	\ees 
	of order $m$ with the coefficients $\left.\alpha_{\beta}\right|_{U_k} \in \widetilde{C}_{\mathcal M}(U_k)$ for all $\beta \in \N_0^d$. 	
	
	The following result provides estimates for all partial derivatives of $f\in C_{\mathcal M}(X)$. 
	\begin{lem}\cite[Theorem 2.3.]{DR16} \label{lem-D} \label{lem-DR}
		Let $f\in C_{\mathcal M}(X)$. Then for every quasi-analytic chart $(U, \phi)$, $f\in \widetilde{C}_{\mathcal M}(U)$. Precisely, for every compact set $K\subset U$, there exists $L\geq 1, h>0$ (depending on $K$ and $\mathcal M$) such that for $f\in C_{\mathcal M}(X)$, the following holds
		\bes
		\|\partial^{\alpha} f\|_{L^\infty(K)}\leq L h^ {|\alpha|} M_{|\alpha|}, \:\: \textit{ for all } \alpha \in \mathbb{N}_0^d.
		\ees
	\end{lem}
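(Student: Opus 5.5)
Fix a $\widetilde{C}_{\mathcal M}$ chart $(U,\phi)$, a compact $K\subset U$, and $f\in C_{\mathcal M}(X)$. Normalize so that $\|f\|_{L^2(X)}=1$; then the defining condition \eqref{defn-CM} reads $\|P^k f\|_{L^2(X)}\le M_{mk}$ for all $k$. The goal is to turn these global $L^2$ bounds on iterates of $P$ into local sup bounds on every derivative, $\|\partial^\alpha f\|_{L^\infty(K)}\le L h^{|\alpha|}M_{|\alpha|}$. The approach is the classical Kotake--Narasimhan iteration, run in the $\widetilde{C}_{\mathcal M}$ setting as Komatsu did for the analytic case. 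There are three steps: local elliptic a priori estimates with precisely tracked constants, an iteration over nested neighbourhoods, and a final Sobolev embedding to pass from $L^2$ to $L^\infty$.

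\emph{Step 1 (a priori estimate).} Take open sets $K\subset V_\rho\subset V_{\rho'}\Subset U$ with $\rho'-\rho=\operatorname{dist}(V_\rho,\partial V_{\rho'})$. Since $P$ is elliptic of order $m$ with $\widetilde{C}_{\mathcal M}$ coefficients, the interior elliptic estimate gives, for $|\beta|=m$,
\bes
\|\partial^{\beta}u\|_{L^2(V_\rho)}\le C\Big(\|Pu\|_{L^2(V_{\rho'})}+\sum_{j<m}(\rho'-\rho)^{j-m}\|\partial^j u\|_{L^2(V_{\rho'})}\Big).
\ees
Here $C$ depends only on $P$ and $U$. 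To apply this to $u=\partial^\alpha P^k f$, I commute $\partial^\alpha$ past $P$. This produces commutator terms $[\partial^\alpha,P]$, whose coefficients are derivatives of the $a_\beta$; these are bounded by $L' h'^{j}M_j$ by the definition of $\widetilde{C}_{\mathcal M}(U)$.

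\emph{Step 2 (iteration).} I then prove by induction on $|\alpha|+mk$ a bound of the form
\bes
\|\partial^\alpha P^k f\|_{L^2(V_{\rho})}\le B\, H^{|\alpha|+mk}\,\epsilon^{-|\alpha|}\,M_{|\alpha|+mk}\quad\text{for }\epsilon\le\rho'' \text{ (the distance available)},
\ees
using a nested sequence of domains that shrink by $\epsilon/(|\alpha|+mk)$ at each step, in the Hörmander style. The base case $\alpha=0$ is exactly the hypothesis $\|P^kf\|\le M_{mk}$.

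The combinatorial sums that arise, such as $\sum\binom{\alpha}{\gamma}M_{|\gamma|}M_{|\alpha-\gamma|+\dots}$, have to be absorbed into $M_{|\alpha|+mk}$. This needs log-convexity (iii) of $m_k$ to get $M_jM_{n-j}\le \binom{n}{j}^{-1}\cdots M_n$, more precisely $m_jm_{n-j}\le m_n$. The factors $(\rho'-\rho)^{-1}$, which become roughly $|\alpha|/\epsilon$, are absorbed using \eqref{M-con}: $k!\le\mathfrak l^kM_k$ lets factorial-type losses be bounded by $M$-growth. The moderate growth condition \eqref{eqn-mg}, together with stability (iv), lets me replace $M_{|\alpha|+mk}$ by $C^{|\alpha|+mk}M_{|\alpha|}M_{mk}$ where needed.

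\emph{Step 3 (Sobolev embedding).} Setting $k=0$ gives $\|\partial^\alpha f\|_{L^2(V)}\le BH^{|\alpha|}M_{|\alpha|}$. Sobolev embedding on $K$ with $s>d/2$ then gives $\|\partial^\alpha f\|_{L^\infty(K)}\le C\sum_{|\beta|\le s}\|\partial^{\alpha+\beta}f\|_{L^2}$. Stability (iv) gives $M_{|\alpha|+s}\le C_s^{|\alpha|}M_{|\alpha|}$, which yields the claim with $h$ depending only on $K$ and $\mathcal M$.

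The main obstacle is Step 2. Keeping the constants multiplicative and uniform requires careful bookkeeping of the shrinking domains, together with the conversion of commutator terms into products $M_jM_{n-j}$ that log-convexity can absorb. I would follow Komatsu's/Lions–Magenes' argument closely, replacing the factorials by $M_k$ throughout.
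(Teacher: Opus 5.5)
The paper gives no argument for this lemma. It simply quotes \cite[Theorem 2.3]{DR16}, whose global-to-local content is the classical elliptic-iterates theorem of Kotake--Narasimhan/Komatsu type. Your plan reconstructs that argument, so the route is the right one.

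Your normalization $\|f\|_{L^2(X)}=1$ is also the correct reading. The condition defining $C_{\mathcal M}(X)$ is invariant under $f\mapsto tf$, so what is true (and needed) is $\|\partial^\alpha f\|_{L^\infty(K)}\le Lh^{|\alpha|}M_{|\alpha|}\|f\|_{L^2(X)}$. No $f$-independent $L$ can work without that factor.

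The problem is Step 2, which carries the whole proof and does not close as you set it up. Write $\alpha=\alpha'+\beta$ with $|\beta|=m$ and apply your interior estimate to $u=\partial^{\alpha'}P^kf$. The principal term is then $C\|\partial^{\alpha'}P^{k+1}f\|_{L^2(V_{\epsilon'})}$, which has the \emph{same} total order $N=|\alpha|+mk$.
\begin{itemize}
\item Induction on $|\alpha|+mk$ is therefore circular. It must be on $|\alpha|$, for all $k$ at once, which is what your base case $\alpha=0$ already presupposes.
\item For the same reason, the elliptic constant $C\ge 1$ in front of the principal term cannot be absorbed into $H^{|\alpha|+mk}$, because $N$ has not changed. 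Iterating $|\alpha|/m$ times would leave a factor $C^{|\alpha|/m}$ at fixed $N$, so a fixed $B$ cannot work.
\item The gain has to come from the weight. With $\epsilon'=\epsilon N/(N+1)$ one gets $\epsilon'^{-|\alpha'|}\le e\,\epsilon^{m}\,\epsilon^{-|\alpha|}$. So the step closes only if you restrict to $\epsilon\le\epsilon_0$ with $eC\epsilon_0^m\le 1/3$, choosing the auxiliary domain so that $K\subset V_{\epsilon_0}$. Equivalently, carry an extra factor $H_1^{|\alpha|}$ with $H_1^m\gg C$. Letting $\epsilon$ run up to ``the distance available'' is not enough.
\end{itemize}
Once this is fixed, the remaining terms are absorbed by taking $H$ large. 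The lower-order terms $((N+1)/\epsilon)^{m-j}\|\partial^j\partial^{\alpha'}P^kf\|$ are handled by $N^pM_{N-p}\le c^{-p}p^pM_N$. The commutator terms are handled by $\sum_{|\gamma|=j}\binom{\alpha'}{\gamma}M_jM_{N-j}\le\binom{N}{j}M_jM_{N-j}\le M_N$, which uses log-convexity of $m_k$ with $m_0=1$.

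Two further repairs are needed.
\begin{itemize}
\item The loss bound $N^pM_{N-p}\le c^{-p}p^pM_N$ is really the ratio condition $M_N\ge cNM_{N-1}$. It follows from (ii) with $c=1$, or from log-convexity of $M_k$ plus \eqref{M-con} with $c=1/(e\mathfrak l)$, via $M_N/M_{N-1}\ge M_N^{1/N}$. It does not follow from \eqref{M-con} alone, as you invoke it.
\item Your estimate controls only derivatives of order exactly $m$. Orders $0<|\alpha|<m$ arise both as cases of the induction and among the lower-order terms. For these use the weighted form $\sum_{|\beta|\le m}(\rho'-\rho)^{|\beta|}\|\partial^\beta u\|_{L^2(V_\rho)}\le C\bigl((\rho'-\rho)^m\|Pu\|_{L^2(V_{\rho'})}+\|u\|_{L^2(V_{\rho'})}\bigr)$, with $u=P^kf$ and $\rho'-\rho=\epsilon/(N+1)$. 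Then compare the resulting $M_{mk+m}$ with $M_{mk+|\alpha|}$ using stability (iv).
\end{itemize}
Moderate growth plays no role in this direction. With these changes, Steps 1--3 do prove the homogeneous form of the lemma.
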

	Here, $\partial^{\alpha}$ denotes the mixed partial derivative of order $|\alpha|$ in local-coordinate charts.
	Throughout this section,  $\mathcal N  = \{N_k\}_{k \in \N_0}$ be any log-convex sequence satisfying the quasi-analyticity condition (\ref{quasi-cond}) and $N_0=1$.  For a ball $B\subset \R^d$, $d\geq 1$, we refine the definition of the quasi-analytic class on $B$.  Precisely,
	\bes
	C_{\mathcal N} (B)=\{h\in C^\infty(B): \|h^{(k)}\|_{L^\infty(B)}\leq N_k, \:\: \forall k\in \N_0\}.
	\ees 
	Here,  $h^{(n)}$ denotes the $n$-th derivative of $h$.
	For $h \in C_{\mathcal N} (B)$, we define $n_h$  by
	\bes
	n_h= \max \left\{n\in \N : \sum_{\log \|h\|_{L^\infty(B)}^{-1}<k \leq n} \frac{N_{k-1}}{N_k} < e \right\}.
	\ees
	For $d=1$,  and $h\in C_{\mathcal N}(0, 1)$, the quantity $n_h$	 is the well-known Bang degree of $h$, which controls the number of zeros (with counting multiplicities) of a function in $C_{\mathcal N}(0, 1)$, therefore,  plays the same role as the degree of polynomials \cite{NSV}.  Clearly, $n_h$ depends on both the decay of the ratios of $N_{k-1}/N_k$ and a lower bound for $\|h\|_{L^\infty(B)}$. Since $\mathcal N$ satisfies the quasi-analytic condition (\ref{quasi-cond}), the quantity $n_h$ is always finite.  For our purposes, we will want uniform bounds on $n_h$ for arbitrary functions $h \in C_\mathcal N (B)$. To achieve this,  we set, for $s \in (0, 1]$
	\be \label{defn-Gamma}
	n_{\mathcal N, s}= \max \left\{n\in \N: \sum_{-\log s <k \leq n} \frac{N_{k-1}}{N_k} < e \right\}.
	\ee
	Therefore, if $h \in C_\mathcal N(B)$ satisfies $\sup_{t \in [0, 1]} |h(t)| \geq s$, then $n_h\leq n_{\mathcal N, s}$.  As in  \cite{JM, NSV}, we also define
	\be \label{defn-ns}
	\gamma_{\mathcal N}(k)= \sup_{1\leq \ell\leq k} \ell\left(\frac{N_{\ell+1}N_{\ell-1}}{N_\ell^2}-1 \right), \:\: \textit{ and }~  \; \Gamma(k)=4e^{4+4\gamma_{\mathcal N}(k)}.
	\ee
	
	The following result establishes a uniform bound on the $L^2$ norm of a quasi-analytic function over a ball $B\subset \R^d$ in terms of the $L^2$ norm of its restriction to a subset of positive measure. This is a higher dimensional extension of \cite[Theorem B]{NSV}. 	
	\begin{prop}\cite[Prop. A10]{M} \label{prop-martin}
		Let $0<\gamma \leq 1$ and $0<t \leq 1$. If $E\subset B$  is a measurable subset satisfying $|E| \geq \gamma |B|>0$, then for all  $f\in C_{\mathcal N} (B)$ with $\|f\|_{L^\infty(B)} \geq t$,
		\bes
		\int_B |f(x)|^2~dx \leq \frac{2}{\gamma} \left(\frac{2d}{\gamma} \Gamma(2n_{\mathcal N, t}) \right)^{4n_{\mathcal N, t}} \int_{E} |f(x)|^2~dx.
		\ees
	\end{prop}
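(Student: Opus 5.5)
The plan is to reduce to the one-dimensional Remez-type inequality of Nazarov--Sodin--Volberg (\cite[Theorem B]{NSV}) by restricting to lines, in the spirit of Kovrijkine's localisation argument. Then I will pass from a sup-norm bound to the stated $L^2$ bound with a Chebyshev argument. In one dimension I take that theorem in the form: if $g\in C_{\mathcal N}([0,1])$ satisfies $\sup_{[0,1]}|g|\geq t$ and $F\subset[0,1]$ is measurable, then
\bes
\sup_{[0,1]}|g| \leq \left(\frac{\Gamma(2n_g)}{|F|}\right)^{2n_g}\sup_F|g| .
\ees
Here $n_g\leq n_{\mathcal N,t}$ by the remark following (\ref{defn-Gamma}). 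Since $\Gamma$ is increasing, $n_g$ can be replaced by $n_{\mathcal N,t}$ throughout, and I write $n:=n_{\mathcal N,t}$ from now on.

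\textbf{Step 1 (from $L^2$ to sup norm).} Set
\bes
E'=\Big\{x\in E: |f(x)|^2\leq \tfrac{2}{|E|}\int_E|f|^2\Big\}.
\ees
By Chebyshev's inequality $|E\setminus E'|\leq |E|/2$, so $|E'|\geq |E|/2\geq \tfrac{\gamma}{2}|B|$. If I can prove a sup-norm Remez inequality
\bes
\sup_B|f|\leq \Big(\tfrac{2d}{\gamma}\Gamma(2n)\Big)^{2n}\sup_{E'}|f|,
\ees
then squaring and using $\int_B|f|^2\leq |B|\sup_B|f|^2$ and $\sup_{E'}|f|^2\leq \tfrac{2}{|E|}\int_E|f|^2\leq\tfrac{2}{\gamma|B|}\int_E|f|^2$ gives exactly the claimed constant $\tfrac{2}{\gamma}(\tfrac{2d}{\gamma}\Gamma(2n))^{4n}$. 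So everything reduces to the sup-norm estimate for a set of relative measure $\gamma/2$ (or $\gamma$, depending on how the constants are distributed).

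\textbf{Step 2 (choice of a good line).} Choose $x_0\in\overline B$ with $|f(x_0)|=\sup_B|f|\geq t$. Write $E'$ in polar coordinates centred at $x_0$: for $\omega\in S^{d-1}$ let $I_\omega=\{x_0+r\omega: 0\leq r\leq \rho(\omega)\}$ be the chord of $B$ from $x_0$ in direction $\omega$. Then
\bes
|E'|=\int_{S^{d-1}}\int_0^{\rho(\omega)}1_{E'}(x_0+r\omega)\,r^{d-1}\,dr\,d\omega,
\qquad |B|=\int_{S^{d-1}}\frac{\rho(\omega)^d}{d}\,d\omega .
\ees
Bounding $r^{d-1}\leq\rho(\omega)^{d-1}$ in the first integral, an averaging argument yields a direction $\omega$ with
\bes
\frac{|E'\cap I_\omega|}{|I_\omega|}\geq \frac{|E'|}{d|B|},
\ees
which is $\geq\gamma/(2d)$ (or $\geq \gamma/d$ with the sharper bookkeeping). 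I expect this step to be the main obstacle: one has to track the constants so that exactly the factor $2d/\gamma$ emerges, and to handle the weight $r^{d-1}$ near $x_0$.

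\textbf{Step 3 (apply the 1D result).} Define $g(\tau)=f(x_0+\tau\rho(\omega)\omega)$ for $\tau\in[0,1]$. Then $g^{(k)}(\tau)=(\rho(\omega)\,\omega\cdot\nabla)^k f$. The class $C_{\mathcal N}(B)$ is to be read with directional derivatives of order $k$ bounded by $N_k$, and $B$ is normalised to have diameter at most $1$ (the rescaling being absorbed into $\mathcal N$, as is done when this is used in the proof of Theorem \ref{thm-main}). With these conventions $g\in C_{\mathcal N}([0,1])$ and $\sup|g|\geq|g(0)|=\sup_B|f|\geq t$. Let $F$ be the rescaled copy of $E'\cap I_\omega$ in $[0,1]$, so $|F|\geq\gamma/(2d)$ by Step 2. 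The one-dimensional theorem gives
\bes
\sup_B|f|=\sup|g|\leq\Big(\tfrac{2d}{\gamma}\Gamma(2n)\Big)^{2n}\sup_F|g|\leq\Big(\tfrac{2d}{\gamma}\Gamma(2n)\Big)^{2n}\sup_{E'}|f|.
\ees
Combined with Step 1, this completes the argument.
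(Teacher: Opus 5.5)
The paper gives no proof of this proposition; it imports it from \cite{M}. Your argument is correct: the Chebyshev reduction to $E'$ with $|E'|\ge\gamma|B|/2$ and a pointwise bound, the polar-coordinate averaging about a maximum point giving a chord on which $E'$ has relative length at least $\gamma/(2d)$, and the one-dimensional Nazarov--Sodin--Volberg inequality with $n_g\le n_{\mathcal N,t}$ together reproduce the constant $\frac{2}{\gamma}\big(\frac{2d}{\gamma}\Gamma(2n_{\mathcal N,t})\big)^{4n_{\mathcal N,t}}$ exactly, so this is presumably the cited argument. Step 2 is not really an obstacle, since the bound $r^{d-1}\le\rho(\omega)^{d-1}$ closes the averaging immediately.

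Your normalisation ($k$-th directional derivatives bounded by $N_k$, and $\mathrm{diam}\,B\le 1$) is genuinely necessary, not cosmetic. Without a size restriction the statement fails, e.g.\ for a small multiple of $e^{-|x|^2}$ on $B=B(0,R)$ with $E=B\setminus B(0,2^{-1/d}R)$ as $R\to\infty$. Contrary to your parenthetical remark, however, the paper does not carry out such a rescaling when it applies the proposition in the proof of Theorem~\ref{thm-main}.
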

	
	In the proof of the above theorem, the authors make essential use of the fact that 
	$\{N_k\}_{k \in \N_0}$  is a quasi-analytic sequence. It is well known that Gevrey classes 
	$G^s$
	are not quasi-analytic when 
	$s>1$. Nevertheless, in \cite{KL}, Kukavica and Li established a similar result for the class $G^s(\Omega)$, where $\Omega$ is a $C^1$ domain in $\R^d$, under the additional assumption that the functions in this class satisfy the doubling condition. Their proof generalizes to $C_{\mathcal M}(\Omega)$, for any sequence $\mathcal M=\{M_k\}_{k \in \N_0}$ satisfying some certain growth condition. For the reader’s convenience, we also provide a streamlined proof following their argument.
	\begin{lem} \label{lem-KL}
		Let $\Omega$ be a connected bounded $C^1$ domain in $\R^d$ and let $\mathcal M=\{M_k\}_{k \in \N_0}$ be a sequence of positive numbers satisfies \eqref{M-con}. 
		Let $f\in \widetilde{C}_{\mathcal M} (\Omega)$ satisfy the doubling condition: there exists $\kappa\geq 2, r_0>0$ such that 
		\be \label{doubling-lem}
		\|f\|_{L^\infty(B(x,2r) \cap \Omega )}\leq \kappa \|f\|_{L^\infty(B(x,r) \cap \Omega)}, \:\: \forall r\in (0, r_0], \:\: x\in \Omega.
		\ee
		Then, for any measurable set $\omega\subset \Omega$ with positive measure, we have
		\bes
		\|f\|_{L^\infty(\Omega)} \leq \hat{C} \|f\|_{L^\infty(\omega)},
		\ees  
		where 
		\bes
		\hat{C}=  \kappa^{2 \mathfrak C} {C}^{n_0} \max\{1, h\}^{n_0+1} LM_{n_0+1} \left(\frac{|\Omega|}{|\omega|}\right)^{2n_0}   \|f\|_{L^\infty(\omega)},
		\ees
		where $n_0= 2[\max \{\log_2 \kappa, \mathfrak l e (hr_0)^{-1}\}] +1,\; C, \; \mathfrak C,~ L$ and $h$ are positive constants depending on $\mathcal M, ~\Omega$.
	\end{lem}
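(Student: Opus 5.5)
Following Kukavica--Li, the plan is to combine a Remez-type inequality for polynomials with Taylor approximation of $f$ at a scale dictated by the doubling constant. Throughout, normalize $\|f\|_{L^\infty(\Omega)}=1$.

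\textbf{Step 1: a ball of comparable size where $\omega$ has density.} Cover $\Omega$ by cubes (or balls intersected with $\Omega$) of side comparable to a small radius $r\le r_0$, fixed below. By pigeonholing there is one such piece $Q$ with $|\omega\cap Q|\gtrsim (|\omega|/|\Omega|)\,|Q|$. The $C^1$ regularity of $\partial\Omega$ gives a uniform interior cone condition, so each piece contains a ball of comparable size and a fixed fraction of its measure. Next, iterate \eqref{doubling-lem} from radius $r$ up to $\operatorname{diam}\Omega$. A chain-of-balls argument through the connected domain keeps the number of doublings bounded by a constant $\mathfrak C$ that depends only on $\Omega$ once $r$ is a fixed multiple of $1/(\mathfrak l e h)$; this yields
\[
1=\|f\|_{L^\infty(\Omega)}\le \kappa^{2\mathfrak C}\,\|f\|_{L^\infty(Q)}.
\]
The doubling condition is also used to compare $\sup_Q|f|$ with $\sup_{\frac12 Q}|f|$, so that Taylor expansions around the center $x_Q$ can be used with a safety margin.

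\textbf{Step 2: polynomial approximation.} On $Q$ of radius $r$, write $f=p+R$, where $p$ is the Taylor polynomial of degree $n_0$ at $x_Q$. By Lemma~\ref{lem-DR}-type bounds (here $f\in\widetilde C_{\mathcal M}$),
\[
|R|\le L\,(C h r)^{n_0+1}\,\frac{M_{n_0+1}}{(n_0+1)!}.
\]
Choose $r$ so that $hr\lesssim 1$, with the $\mathfrak l e$ factor in $n_0$ coming from \eqref{M-con}. Choose $n_0\approx 2\log_2\kappa$ so that $\kappa^{-n_0/2}$-type losses are absorbed. The key point is to pick $n_0$ and $r$ so that $\|R\|_{L^\infty(Q)}\le \tfrac12\,\kappa^{-2\mathfrak C}\,(\ldots)$, i.e.\ the remainder is small compared with $\|f\|_{L^\infty(Q)}$, up to the explicit factor $C^{n_0}\max\{1,h\}^{n_0+1}LM_{n_0+1}$ that will appear in $\hat C$.

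\textbf{Step 3: Remez inequality.} Apply the multivariate Remez inequality (Brudnyi--Ganzburg) to $p$ on the convex body $Q$ with the subset $\omega\cap Q$:
\[
\|p\|_{L^\infty(Q)}\le \left(\frac{C|Q|}{|\omega\cap Q|}\right)^{n_0}\|p\|_{L^\infty(\omega\cap Q)}\lesssim \left(\frac{C|\Omega|}{|\omega|}\right)^{n_0}\|p\|_{L^\infty(\omega\cap Q)}.
\]
Then pass back to $f$:
\[
\|f\|_{L^\infty(Q)}\le \|p\|_{L^\infty(Q)}+\|R\|_{L^\infty(Q)},\qquad \|p\|_{L^\infty(\omega\cap Q)}\le\|f\|_{L^\infty(\omega)}+\|R\|_{L^\infty(Q)}.
\]
The remainder term is multiplied by the Remez constant, which is why the exponent in the final bound becomes $2n_0$ rather than $n_0$.

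\textbf{Main obstacle.} The remainder $\|R\|$ cannot simply be made small compared with $\|f\|_{L^\infty(Q)}$ independently of $|\omega|$. I would handle this as Kukavica--Li do. If $\|R\|\le \tfrac12\|f\|_{L^\infty(Q)}\,(C|\Omega|/|\omega|)^{-n_0}$, absorb it and conclude. Otherwise, $\|f\|_{L^\infty(Q)}$ is itself bounded by $2\|R\|(C|\Omega|/|\omega|)^{n_0}$. Combined with Step 1, this means $1$ is bounded by a quantity of size $\kappa^{2\mathfrak C}(C|\Omega|/|\omega|)^{n_0}LM_{n_0+1}\max\{1,h\}^{n_0+1}r^{n_0+1}$. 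In that case, shrink the scale $r$ and rerun the argument at the smaller scale. Each halving of $r$ costs a factor $\kappa$ by doubling but gains $2^{-(n_0+1)}\le\kappa^{-2}$ in the remainder, and this is exactly why $n_0\ge 2\log_2\kappa$ is needed. Balancing the two cases gives the stated $\hat C$ with exponent $2n_0$ on $|\Omega|/|\omega|$. Careful bookkeeping of this dichotomy is the step I expect to be most delicate.
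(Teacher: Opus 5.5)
Your proposal is correct and follows the same Kukavica--Li scheme as the paper. The paper imports as a black box (inequality (3.14) of Kukavica--Li) the scale-$r$ estimate you reconstruct in Steps 1--3: Remez applied to the Taylor polynomial, a remainder of size $L(rh)^{n+1}M_{n+1}$, and a doubling loss $\kappa^{\mathfrak C}r^{-\log_2\kappa}$. The only difference is the final step: instead of your dyadic halving/dichotomy, the paper chooses $r=\bigl(\|f\|_{L^\infty(\omega)}/(Lh^{n+1}M_{n+1}\|f\|_{L^\infty(\Omega)})\bigr)^{1/(n+1)}$ in one shot, with the term $\mathfrak l e(hr_0)^{-1}$ in $n_0$ together with \eqref{M-con} guaranteeing $r\le r_0$, and then absorbs $\|f\|_{L^\infty(\Omega)}^{\eta}$ with $\eta=\log_2\kappa/(n_0+1)<1/2$, so the exponent $n_0/(1-\eta)\le 2n_0$ appears directly and matches what your iteration gives.
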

	
	\begin{proof}
		Since $f\in \widetilde{C}_{\mathcal M} (\Omega)$ and $\Omega$ is bounded, it follows from the Definition \ref{Defn-C} that there exists $L\geq 1, h>0$ such that for all multi-indices  $\alpha \in \N^d_0$,
		\bes
		\|\partial^{\alpha}f\|_{L^{\infty}(\Omega)} \leq L h^{|\alpha|}M_{|\alpha|} \|f\|_{L^{\infty}(\Omega)}.
		\ees
		The term $\|f\|_{L^{\infty}(\Omega)}$ is included on the right-hand side to make similarity with the notation in \cite{KL}. Following the proof of \cite[Theorem 2.1]{KL}, we get from the inequality (3.14) in \cite{KL} that 
		\be \label{est-KL}
		\|f\|_{L^\infty(\Omega)} \leq \frac{\kappa^{\mathfrak C}}{r^{\log_2 \kappa}} C^{n+1} \left(\left(\frac{|\Omega|}{|\omega|}\right)^n \|f\|_{L^\infty(\omega)} + L (rh)^{n+1}  M_{n+1} \|f\|_{L^\infty(\Omega)}\right),
		\ee
		for all $n\in \N$ and $r\in (0, r_0]$. We now choose 
		\bes
		r=\left(\frac{ \|f\|_{L^\infty(\omega)}}{L h^{n+1} M_{n+1} \|f\|_{L^\infty(\Omega)}}\right)^{1/(n+1)}.
		\ees
		We later choose $n$ large enough so that $r< r_0$. Using the above form of $r$, it follows from the estimate (\ref{est-KL})  that
		\bea \label{est-KL-2}
		\|f\|_{L^\infty(\Omega)} &\leq& \kappa^{\mathfrak C} C^{n+1} \left(\left(\frac{|\Omega|}{|\omega|}\right)^n +1\right) \|f\|_{L^\infty(\omega)}  \left(\frac{ \|f\|_{L^\infty(\omega)}}{ L h^{n+1}M_{n+1} \|f\|_{L^\infty(\Omega)}}\right)^{-\log_2 \kappa/(n+1)} \nonumber\\
		&\leq& \kappa^{\mathfrak C} C^{n+1} \left(\frac{|\Omega|}{|\omega|}\right)^n \|f\|_{L^\infty(\omega)}^{1- \log_2 \kappa/(n+1)} \|f\|_{L^\infty(\Omega)}^{\log_2 \kappa/(n+1)} \left(\frac{1 }{L h^{n+1}M_{n+1} }\right)^{-\log_2 \kappa/(n+1)}.
		\eea
		Using the fact that $n!\leq {\mathfrak l}^n M_n$ ,we deduce that $r \leq \frac{\mathfrak l e }{h(n+1)}$, so therefore by choosing 
		
		\bes
		n_0= 2[\max \{\log_2 \kappa, \mathfrak l e (hr_0)^{-1}\}] +1,
		\ees
		where $[m]$ denotes the largest integral part of $m$, we obtain $r\leq r_0$. Moreover,  $n_0\geq 2 [\log_2 \kappa]+1 $ insures that $\eta=\log_2 \kappa/(n_0+1) \in (0, 1/2)$. Therefore, estimate (\ref{est-KL-2}) yields 
		\bes 
		\|f\|_{L^\infty(\Omega)} \leq \kappa^{\mathfrak C} C^{n_0+1} \left(\frac{|\Omega|}{|\omega|}\right)^{n_0} \|f\|_{L^\infty(\omega)}^{1-\eta} \|f\|_{L^\infty(\Omega)}^{\eta} \left(\frac{1}{L h^{n_0+1}M_{n_0+1}}\right)^{- \eta}.
		\ees
		The fact that $1<1/(1-\eta)< 2$ implies  that
		\beas
		\|f\|_{L^\infty(\Omega)} &\leq& \left(\kappa^{\mathfrak C} C^{n_0+1} (L h^{n_0+1}M_{n_0+1})^{\eta} \right)^{1/(1-\eta)}  \left(\frac{|\Omega|}{|\omega|}\right)^{n_0/(1-\eta)}  \|f\|_{L^\infty(\omega)} \\
		&\leq& \kappa^{2 \mathfrak C} {C}^{n_0} \max\{1, h\}^{n_0+1} LM_{n_0+1} \left(\frac{|\Omega|}{|\omega|}\right)^{2n_0}   \|f\|_{L^\infty(\omega)}.
		\eeas
	\end{proof}

	\section{Proof of the main results}

	In this section we prove the results. 
	
	\begin{proof}[Proof of Theorem \ref{thm-main}]
		Let $X$ be a compact connected quasi-analytic manifold of dimension $d$. Let $f\in C^\infty(X)$ such that $\|P^k f\|_{L^2(X)}\leq  M_{mk} \|f\|_{L^2(X)}$. Without loss of generality,  we assume $\|f\|_{L^2(X)}=1$.  Since $\Omega$ is a $\gamma$-relatively dense set,  there exists $r \in (0,  {r_{inj}}/{2})$ such that $|\mathcal B(x, r) \cap \Omega| \geq \gamma | \mathcal B(x, r)|$, for all $x\in X$.  We find a finite collection $x_j$ in $X$ such that $X=\cup_{j=1}^N \mathcal B(x_j,  r)$. From now onward, we use $\mathcal B_j$ to denote $\mathcal B(x_j, r)$.  We note that 
		\be \label{est-GB}
		N^{-1} \sum_{j=1}^N \|f\|_{L^2(\mathcal B_j)} \leq  \|f\|_{L^2(X)} \leq \sum_{j=1}^N  \|f\|_{L^2(\mathcal B_j)}.
		\ee
		Since $f \in C_{\mathcal M}(X)$, it follows from Lemma \ref{lem-DR} that 
		\be \label{D-est}
		\|\partial^\alpha f\|_{L^2(X)} \leq \|\partial^\alpha f\|_{L^\infty(X)}\leq L h^{|\alpha|} M_{|\alpha|},  \:\: \textit{ for all } \alpha \in \mathbb{N}_0^d.
		\ee
		Here, the constants $L\geq 1, h>0$ depends only on $X$ and $\mathcal M$. We decompose the family $\{\mathcal B_j\}_{j=1}^{N}$ into good and bad balls. A ball $\mathcal B_j$ is said to  be bad if there exists $\alpha\in \N_0^d$ with $|\alpha| \geq 1$ such that 
		\bes
		\int_{\mathcal B_j} \left| \partial^\alpha f(x)\right|^2 ~dx > 2^{2|\alpha|+d+1}N L^2 {h}^{2|\alpha|} M_{|\alpha|}^2 \int_{\mathcal B_j} |f(x)|^2
		~dx.
		\ees
		If a ball is not bad,  we call it good.  Precisely,  if $\mathcal B_j$ is good,  then for all $\alpha\in \N_0^d$ with $|\alpha| \geq 1$
		\be \label{defn-good}
		\int_{\mathcal B_j} \left| \partial^\alpha f(x)\right|^2 ~dx \leq 2^{2|\alpha|+d+1}N L^2 {h}^{2|\alpha|} M_{|\alpha|}^2  \int_{\mathcal B_j} |f(x)|^2~dx.
		\ee
		It follows from the definition that if $\mathcal B_j$ is bad,  then 
		\be \label{est-bad}
		\int_{\mathcal B_j}  |f(x)|^2~dx < \sum_{\alpha\in \N_0^d}\frac{1}{2^{2|\alpha|+d+1}N L^2 {h}^{2|\alpha|} M_{|\alpha|}^2}  \int_{\mathcal B_j}  |\partial^\alpha f(x)|^2~dx.
		\ee
		Let $\mathcal B_{bad}=\cup \mathcal B_j$,  where the balls $\mathcal B_j$'s are bad.  Similarly,  $\mathcal B_{good}= \cup \mathcal B_j$, where $ \mathcal B_j$'s are good balls.
		Consequently, the estimates (\ref{est-bad}), (\ref{est-GB}) and \eqref{D-est} yield
		\beas
		\int_{\mathcal B_{bad}}  |f(x)|^2~dx &\leq&  \sum_{\{j: \mathcal B_j \textit{ is bad }\}} \int_{\mathcal B_j} | f(x)|^2~dx\\
		&\leq& N \sum_{\alpha\in \N_0^d}\frac{1}{2^{2|\alpha|+d+1}N L^2 {h}^{2|\alpha|} M_{|\alpha|}^2} \int_X | \partial^\alpha f(x)|^2~dx\\
		&\leq&  \sum_{m=0}^{\infty} \binom{m+d-1}{m}~\frac{N}{2^{2m+d+1}N L^2 {h}^{2m} M_{m}^2}~ L^2 h^{2m} M_{m}^2 \\
		&\leq& \sum_{m=0}^{\infty} \frac{2^{m+d-1}}{2^{2m+d+1}} = \frac{1}{2}\|f\|_{L^2(X)}^2,
		\eeas
		since $\binom{m+d-1}{m} \leq \sum_{j=0}^{m+d-1} \binom{m+d-1}{j} = 2^{m+d-1}$.
		Consequently,
		\be \label{est-good}
		\int_{\mathcal B_{good}} |f(x)|^2~ dx  \geq \frac{1}{2} \|f\|_{L^2(X)}^2.
		\ee
		We now fix a good ball $\mathcal B_j$,  and henceforth we will denote it by $\mathcal B$,  omitting the subscript $j$.  Let $x_0 \in \mathcal B$ be a point such that $|f(x_0)| \geq \|f\|_{L^2(\mathcal B)} ~|\mathcal B|^{-1/2}$.  This is possible and can be shown by contradiction.
		By Sobolev embedding theorem  \cite[Theorem 4.12]{AF},
		\bes
		W^{d,2}(\mathcal B) \hookrightarrow L^\infty(\mathcal B),
		\ees
		implies that there exists a positive constant $C_d \ge 1$ depending only on the dimension \(d \ge 1\) such that
		
		\bes
		\|f\|_{L^\infty(\mathcal B)} \leq C_d \sum_{|\beta|\leq d}\|\partial^\beta f\|_{L^2(\mathcal B)},	
		\ees
		and hence, since $\mathcal B$ is a good ball and $\mathcal M$ satisfies the stability condition ${(iv)}$, we obtain
		\beas
		\|\partial^\alpha f\|_{L^\infty(\mathcal B)} &\leq& C_d \sum_{|\beta|\leq d}\|\partial^{\alpha+\beta} f\|_{L^2(\mathcal B)}\\
		&\leq& C_d \sum_{|\beta|\leq d} 2^{(|\alpha|+|\beta|)+\frac{d+1}{2}}N^{\frac{1}{2}} L {h}^{|\alpha|+|\beta|} M_{(|\alpha|+|\beta|)} \|f\|_{L^2(\mathcal B)}\\
		&\leq& C_d \sum_{|\beta|\leq d} 2^{(|\alpha|+|\beta|)+\frac{d+1}{2}}N^{\frac{1}{2}} L {h}^{|\alpha|+|\beta|} H^{\frac{2d|\alpha| +d^2 - d}{2}}M_{|\alpha|} \|f\|_{L^2(\mathcal B)}\\
		&\leq& C_d \left(\sum_{|\beta|\leq d} 1 \right) L N^{\frac{1}{2}}2^{\frac{3d+1}{2}} h^d H^{\frac{d^2-d}{2}} (2hH^{2d})^{|\alpha|} M_{|\alpha|}\|f\|_{L^2(\mathcal B)}\\
		&=& C \mathcal{H}^d \mathfrak{B}^{|\alpha|}  M_{|\alpha|} \|f\|_{L^2(\mathcal B)}.
		\eeas 	
		where $C = C_d (\sum_{|\beta|\leq d} 1)L N^{1/2}$, $\mathcal{H} = 2^{{(3d+1)}/{2d}}h H^{(d-1)/2}$ and $\mathfrak{B} = 2hH^{2d}$ all these constants depends only on $X$ and the weight sequence $\mathcal M$; moreover, the constant $H$ arises from the stability condition $(iv)$ satisfied by $\mathcal M$.	
		Let 
		\bes
		F(x)=\frac{f(x)}{C \mathcal{H}^d \|f\|_{L^2(\mathcal B)}}, \:\: \textit{ for } \:\: x\in \mathcal B. 
		\ees						
		From above we get that
		\bes
		\|\partial^\alpha F\|_{L^\infty(\mathcal B)}\leq \mathfrak{B}^{|\alpha|} M_{|\alpha|}.
		\ees
		We now define a new log-convex sequence $\widetilde {\mathcal M} =\{\widetilde M_k\}_{k\in \N_0}$ as follows
		\be \label{defn-tilde-M}
		\widetilde M_k = \mathfrak{B}^k M_{k}. 
		\ee
		Clearly, $\widetilde{\mathcal M}$ is log-convex, quasi-analytic and $\widetilde M_0=1$.Then it follows that $F\in C_{\widetilde{\mathcal M}}(\mathcal B)$. Moreover, by the assumption on the point \(x_0\), it follows that $|F(x_0)|=  \frac{|f({x_0})|}{C \mathcal{H}^d \|f\|_{L^{2}(B)}} \geq \frac{|\mathcal B|^{-1/2}}{C \mathcal{H}^d}$. We set
		\be \label{defn-s}
		s= min\{1, \; (C \mathcal{H}^d |\mathcal B|^{1/2})^{-1}\}.
		\ee
		Then
		\bes
		\sup_{x\in \mathcal B} |F(x)|\geq s,   \textit{ and hence } \:\: n_{F}\leq n_{\widetilde{\mathcal M},  s}.
		\ees
		Applying Proposition \ref{prop-martin}, we conclude that 
		\bes
		\int_\mathcal B |F(x)|^2~dx \leq \frac{2}{\gamma} \left(\frac{2d}{\gamma} \Gamma(2n_{\widetilde{\mathcal M}, s}) \right)^{4n_{\widetilde{\mathcal M}, s}} \int_{\mathcal{B} \cap \Omega} |F(x)|^2~dx.
		\ees
		Which implies that 
		\be \label{est-local}
		\int_\mathcal B |f(x)|^2~dx \leq \frac{2}{\gamma} \left(\frac{2d}{\gamma} \Gamma(2n_{\widetilde{\mathcal M}, s}) \right)^{4n_{\widetilde{\mathcal M}, s}} \int_{\mathcal{B} \cap \Omega} |f(x)|^2~dx.
		\ee
		Using the estimate (\ref{est-good}) it follows from (\ref{est-local}) and (\ref{est-GB}) that
		\beas
		\int_{X} |f(x)|^2~dx &\leq& 2 \sum_{\{j: \mathcal B_j \textit{ is good }\}} \int_{\mathcal B_j} |f(x)|^2~dx\\
		&\leq& 2 \sum_{\{j: \mathcal B_j \textit{ is good }\}}\frac{2}{\gamma} \left(\frac{2d}{\gamma} \Gamma(2n_{\widetilde{\mathcal M}, s}\right)^{4 n_{\widetilde{\mathcal M}, s}} \|f \|_{L^2(\mathcal B_j \cap \Omega)}^2\\
		&\leq&  \frac{4N}{\gamma} \left(\frac{2d}{\gamma} \Gamma(2n_{\widetilde{\mathcal M}, s}\right)^{4 n_{\widetilde{\mathcal M}, s}} \|f \|_{L^2(\mathcal B_{good} \cap \Omega)}^2\\
		&\leq& \frac{4N}{\gamma} \left(\frac{2d}{\gamma} \Gamma(2n_{\widetilde{\mathcal M}, s}\right)^{4 n_{\widetilde{\mathcal M}, s}} \|f \|_{L^2(\Omega)}^2,
		\eeas
		This completes the proof. 
		
	\end{proof}
	
	\begin{proof}[Proof of Theorem \ref{thm-main-1}]
		As before, $\mathcal B(x, r)$ denotes the geodesic ball centred at $x$ of radius $r$ in $X$. We set $\mathcal R = \text{min}\{r_{inj}, r_0\}$. We choose $r \leq \mathcal R /2$ such that, for each $x\in X$, there exists a quasi-analytic chart  $(\mathcal B(x, r), \phi_x)$ and the quasi-analytic map $\phi_x:\mathcal B(x, r) \ra U_x \subset \R^d$ is a local diffeomorphism. Using compactness of $X$, we find a finite open cover
		\bes
		X = \bigcup_{i=1}^N \mathcal B(x_i,r),
		\ees
		There exists $C>0$ (independent of $r$) such that the $X$ can be covered by at most $N = ([C/{r}]+1)^d$ number of geodesic balls with radius $r$ (see \cite[Lemma 3.1]{KL}). Hence $|\mathcal B(x,r)| N \sim C$ for some $C$ independent of $r$. Since $\omega \subseteq X$ and $|\omega| > 0$, there exist a ball $\mathcal B(x,r)$, with $x = x_i $ for some $i \in \{1,2,...,N\}$, such that
		\be \label{est-lower}
		|\mathcal B(x,r) \cap \omega| \geq \frac{|\omega|}{N}.
		\ee
		Compactness of $X$ also provides $x_0 \in X$ such that $|f(x_0)| = \|f\|_{L^{\infty}(X)}$. Since $X$ is connected, we can join $x_0$ and $x$ by an overlapping chain of balls with radius $\hat r= 2 r$. Since $f$ satisfies doubling condition (\ref{doubling}), we get
		\be \label{est-est} 
		\|f\|_{L^{\infty}(X)} = |f(x_0)| =\|f\|_{L^{\infty}(\mathcal B(x_0, \hat r))} \leq \kappa^{\mathcal C}\|f\|_{L^{\infty}(\mathcal B(x, \hat r))},
		\ee
		where the constant $\mathcal C$ depends on $r$. Using doubling property (\ref{doubling}) once again on concentric balls centered at $x$, we obtain
		\be \label{est-est-2}
		\|f\|_{L^{\infty}(\mathcal B(x, \hat r))} \leq \kappa \|f\|_{L^{\infty}(\mathcal B(x,r))}.
		\ee
		From the above estimates (\ref{est-est}) and (\ref{est-est-2}), we conclude that 
		\be \label{est-est-3} 
		\|f\|_{L^{\infty}(X)} \leq \kappa^{\mathcal C+1} \|f\|_{L^{\infty}(\mathcal B(x,r))}.
		\ee
		Thus, it suffices to prove the theorem for $\mathcal B(x,r)$ instead of $X$. 
		Therefore, we aim to establish an observability estimate for the geodesic ball $\mathcal B(x,r)$. 
		From now onward, we use $\mathcal B$ to denote $\mathcal B(x,r)$. 
		Since $f \in C_{\mathcal M}(X)$, it follows from  Lemma \ref{lem-DR} that there exists $L\geq 1, h>0$ (depending on $\mathcal M$ and $X$)		
		\bes
		\|\partial^\alpha f\|_{L^\infty(X)}\leq  L h^{|\alpha|} M_{|\alpha|} , \; \; \;\text{ for all multi-indices $\alpha \in \mathbb{N}_0^d$},
		\ees
		Consequently, $f\in \widetilde{C}_{\mathcal M}(\mathcal B)$. Therefore, by the Lemma \ref{lem-KL}, we get that
		\be
		\|f\|_{L^\infty(\mathcal B)} \leq \kappa^{2 \mathcal C} {C}^{n_0} \max\{1, h\}^{n_0+1} LM_{n_0+1}  \left(\frac{|\mathcal B|}{|\mathcal B \cap \omega|}\right)^{2n_0}    \|f\|_{L^\infty(\mathcal B \cap \omega)},
		\ee
		Hence, the fact that $|\mathcal B | N \sim C$ and the estimate (\ref{est-lower}) imply that  
		\be
		\|f\|_{L^\infty(\mathcal B)} \leq \kappa^{2 \mathcal C} {C}^{n_0}\max\{1, h\}^{n_0+1} L  M_{n_0+1} \left(\frac{C}{|\omega|}\right)^{2n_0}   \|f\|_{L^\infty(\omega)},
		\ee
		Using the estimates (\ref{est-est-3}), we obtain 
		\bes
		\|f\|_{L^{\infty}(X)} \leq \kappa^{3\mathcal C+1}  \left(\frac{C}{|\omega|}\right)^{2n_0} M_{n_0}  \|f\|_{L^\infty(\omega)}.
		\ees
		where $n_0= 2[\max \{\log_2 \kappa, \mathfrak l e (hr_0)^{-1}\}] +1$ and $C$ is a constant depending only on $\mathcal M$ and $X$, while $\mathcal C$ depends solely on $X$.
	\end{proof}
	
	\begin{proof}[proof of corollary \ref{cor-eig} ]
		We have $h=\sum_{j=1}^N \phi_j$.
		Therefore, Plancherel formula yields
		\beas
		\|P^k h \|_{L^2(X)}^2 &=& \sum_{\lambda_j=1}^N \lambda_j^{2k}\|\phi_j\|_{L^2(X)}^2 \leq  {\lambda_N}^{2k} \|h\|_{L^2(X)}^2	
		\eeas
		Define $M_k = k! (\lambda_N)^{k/m}$, so $\mathfrak m = \{(\lambda_N)^{k/m}\}_{k \geq 0}$.
		Now it easy to check that $\{M_k\}_{k \in \N_0}$ is a regular weight sequence and satisfying \eqref{eqn-mg},\eqref{M-con}. Therefore $h \in C_{\mathcal M}(X)$.
		
		It follows from \cite[Theorem 4.1]{D} and \cite{KL} that there exist $r_0, C>0  $ such that
		\bes
		\sup_{\mathcal B(x,2r)} |h|\leq e^{\gamma} \sup_{\mathcal B(x,r)} |h|, \:\: \textit{ for all } \:\:  r\in (0, r_0], \; \;  x\in X,
		\ees
		where $\gamma = C \sqrt{\lambda_N} + CN^2~\log N + C$.
		Applying Theorem~\ref{thm-main-1}, we obtain 
		\beas
		\|h\|_{L^{\infty}(X)} &\leq& \left(e^{\gamma}\right)^{3\mathcal C +1} \left(\frac{C}{|\omega|}\right)^{2\gamma} M_{n_0} \|f\|_{L^\infty(\omega)} \\
		&\leq& \left( \frac{C}{|\omega|}\right)^{2 \gamma} \gamma^{C \gamma}\|h\|_{L^{\infty}(\omega)}.
		\eeas
		
	\end{proof}

	\begin{proof}[Proof of Theorem \ref{QM}]
		Let $c>0$. We show that there exists $C>0$ depending on $\gamma, c$ and $X$ such that whenever a function $f \in L^2(X)$ satisfies
		\be \label{est-proof}
		\sum_{j\in \N_0} |f_j|^2 ~e^{2j^{1/m}/log^\delta(e+ j^{1/m})} \leq c^2 \|f\|_{L^2(X)}^2,
		\ee
		and $\Omega$ is a $\gamma$-relatively dense set in $X$, we get $\|f\|_{L^2(X)} \leq C \|f\|_{L^2(\Omega)}$.
		In view of Theorem \ref{thm-main}, it is enough to show that $f \in C_{\mathcal M}(X)$, for some regular weight sequence $\mathcal M$ which satisfies moderate growth condition \eqref{eqn-mg} together with \eqref{M-con} and quasi-analytic condition \eqref{quasi-cond}. Let $W(t) = e^{t/log^{\delta}(e+t)}$, for $t > 1$ and $W(t)= 1$, for $t\in [0, 1]$.  Without loss of generality,  we assume $\|f\|_{L^2(X)}=1$. It can be shown that 
		\bes
		\sup_{\lambda \geq 0} \frac{\lambda^k}{W(\lambda)}\leq {\widetilde{C}}^k k!~ (\log(e+k))^{\delta k},
		\ees
		where $\widetilde{C}>0$ is independent of $k$.
		Let us define $M_k = {\widetilde{C}}^k k!~ (\log(e+k))^{\delta k}$. Using Plancherel formula (\ref{Plancherel}) and hypothesis (\ref{est-proof}),  we get
		\beas 
		\|P^{k} f\|_{L^2(X)}&=&  \left(\sum_{j \in \mathbb N} \frac{\lambda_{j}^{2k}}{W(\lambda_j^{1/m})^2} |f_j|^2 ~W(\lambda_j^{1/m})^2 \right)^{1/2}  \\
		&\leq& \sup_{j \in \mathbb N_0} \frac{\lambda_j^{k}}{W(\lambda_j^{1/m})}  \left(\sum_{k \in \mathbb N} |f_j|^2~W(\lambda_j^{1/m})^2 \right)^{1/2} \\
		&\leq& c ~ \sup_{\lambda\geq 0} \frac{\lambda^{mk}}{W(\lambda)}~ \|f\|_{L^2{(X)}} \leq c~M_{mk} \|f\|_{L^2{(X)}}.
		\eeas
		Let $\mathcal M^\prime= \{c M_k\}_{k \in \N_0}$. By \cite[Example~2.14]{Fu}, $\mathcal M'$ is a regular weight sequence satisfying \eqref{eqn-mg} and \eqref{M-con}, with shift modification \(m_0=c>0\). Moreover, it follows from \cite[Proposition~2.2]{JM} that $\mathcal M'$ is quasi-analytic. Hence,
   	    $f\in C_{\mathcal M'}(X)$. This completes the proof of Theorem~\ref{QM}, in view of Theorem~\ref{thm-main}.
   	    
	\end{proof}    
	
	\begin{proof}[Proof of Theorem \ref{unique-cont}]
		Let $f\in C_{\mathcal M}(X)$ and $\omega\subset X$ is a positive measure set on which $f$ vanishes identically. By Lemma~\ref{lem-DR}, it follows that $f \in \widetilde{C}_{\mathcal M}(\mathcal B)$. Let us cover $X\subset \cup_{j=1}^N \mathcal B_j$ where each $\mathcal B_j$ is a ball of radius $r_{inj}/2$. Then there exists a ball $\mathcal B$ such that $|\omega \cap \mathcal B|>0$. We claim that  $f$ must vanishes identically on $\mathcal B$. If not, then $t=\|f\|_{L^\infty(\mathcal B)}>0$ and hence by Proposition \ref{prop-martin} we conclude that $f$ vanishes on the ball $\mathcal B$, which is a contradiction. Repeating the same argument for finite number of overlapping balls, we conclude that $f$ vanishes on $X$.
	\end{proof}

\end{document}